\documentclass{base_class}

\title[]{On mixed Hodge-Riemann relations for translation-invariant valuations and   Aleksandrov-Fenchel inequalities} 
\author{Jan Kotrbat\'y}
\author{Thomas Wannerer}
\email[Corresponding author]{kotrbaty@math.uni-frankfurt.de}
\email{thomas.wannerer@uni-jena.de}
\address{Institut für Mathematik, Goethe-Universit\"at Frankfurt, Robert-Mayer-Str. 10, 60629 Frankfurt, Germany}
\address{Fakult\"at f\"ur Mathematik und Informatik, Friedrich-Schiller-Universit\"at Jena, 07743 Jena, Germany}

\thanks{JK was supported by FWF Grant P31448-N35 and by DFG grant BE 2484/5-2. TW was supported by DFG grant WA 3510/3-1.}
\date{\today}

\begin{document}

\begin{abstract}

A version of the Hodge-Riemann relations for valuations was recently conjectured and proved in several special cases by the first-named author \cite{Kotrbaty:HR}. 
The Lefschetz operator considered there arises as either the product or the convolution with the mixed volume of several Euclidean balls. Here we prove that in (co-)degree one the Hodge-Riemann relations persist if the balls are replaced by several different (centrally symmetric) convex bodies with smooth boundary with positive Gauss curvature.
While these mixed Hodge-Riemann relations for the convolution directly imply the Aleksandrov-Fenchel inequality, they yield for the dual operation of the product  a new inequality. This new inequality
strengthens classical consequences of  the Aleksandrov-Fenchel inequality for lower dimensional convex bodies and generalizes some of the geometric inequalities recently discovered by S.~Alesker \cite{Alesker:Kotrbaty}.

\end{abstract}

\maketitle

\section{Introduction}

\subsection{General background}

In convex geometry,  a valuation is a function on the space of convex bodies that is finitely additive.
 The volume  and, more generally, the  mixed volume  of convex bodies are the prime examples of valuations. In fact,  according to a conjecture of P. McMullen, the mixed volumes span a dense subspace of the space of continuous and translation-invariant valuations.

Systematically investigated already by W.~Blaschke and H.~Hadwiger,  valuations have  attracted interest from various branches of convexity ever since.   
A major rupture in this development were the  proof  of McMullen's conjecture 
and  the many new ideas introduced  by S.~Alesker \cite{Alesker:Irreducibility} in 2001. The subsequent discovery, also pioneered by Alesker, of various algebraic structures on valuations, led to striking developments in integral geometry.  The key insight due to J.H.G.~Fu \cite{Fu:Unitary} is that  the kinematic formulas in an isotropic space $(M,G)$ are, in a precise sense, dual to the Alesker product of $G$-invariant valuations on $M$. A landmark result obtained by harnessing the power of the algebraic structures on  valuations is the determination of the kinematic formulas in complex spaces forms by A.~Bernig,  J.H.G.~Fu, and G.~Solanes \cite{BernigFu:Hig,BFS}. For  more results and intriguing phenomena arising from this new algebraic perspective, we refer the reader to \cite{AbardiaBernig:AdditiveFlag, Bernig:Tutte,BernigHug:Tensor,FuW:Riemannian,Solanes:Contact,SolanesW:Spheres}, the lecture notes \cite{AleskerFu:Barcelona},  and the numerous references therein.
 Concerning other, quite different perspectives on valuations we mention the recent articles \cite{JochemkoSanyal:Combinatorial, CLM:Hadwiger,CLM:Homogeneous,BFS:PseudoRiemannian,Faifman:Contact}.

 Inasmuch as McMullen's conjecture is true, it seems reasonable to expect that the new algebraic structures could shed a new light even on the classical notion of mixed volume. While indeed this apparatus turned out to be remarkably useful to prove identities for valuations arising for example in integral geometry,  it seemed to have  nothing to offer concerning inequalities.
Very recently, however, this story has  taken an abrupt turn; namely, the first-named author \cite{Kotrbaty:HR} conjectured   and proved in several special cases an analogue for valuations of the Hodge-Riemann  relations from K\"ahler geometry.  Remarkably, as first observed by S.~Alesker \cite{Alesker:Kotrbaty},   these  results directly imply both classical and  new geometric inequalities between the mixed volumes of convex bodies.

\subsection{Main results}
In K\"ahler geometry, the Hodge-Riemann relations hold not only for a power of one single Lefschetz operator, but there is also a mixed version where the Lefschetz map may be composed of several operators chosen from a certain cone. First stated and proved in special cases by M.~Gromov \cite{Gromov:ConvexSets} and by V.A.~Timorin \cite{Timorin:Mixed}, these mixed Hodge-Riemann relations have been proved in full generality by T.-C.~Dinh and V.-A.~Nguy\^en  \cite{DinhNguyen:Mixed}  in  2006. Concerning other incarnations of the Hodge-Riemann relations we mention the breakthrough results \cite{McMullen:SimplePolytopes,AHK:Matroids} and refer the reader to the survey by J.~Huh \cite{Huh:HR}.

 There are two natural multiplicative structures on valuations: the Alesker product and the Bernig-Fu convolution.
The first-named author  formulated in  \cite[Conjecture E]{Kotrbaty:HR} a  mixed version of the  Hodge-Riemannian  relations for the convolution and  showed that if true it would imply the  Aleksandrov-Fenchel inequality. In this note we prove this conjecture in co-degree $1$ (Theorem~\ref{thm:MixedConvolution}) and deduce from it a dual version in degree $1$ for the Alesker product (Theorem~\ref{thm:MixedProduct}). From the latter result we further deduce an apparently new geometric inequality for mixed volumes that  is formally analogous to the Aleksandrov-Fenchel inequality (Theorem \ref{thm:DAF}). Corollary 1.5 and, more generally, Theorem~\ref{thm:AFfunctionals} show that this inequality improves  classical consequences of the Aleksandrov-Fenchel inequality for lower dimensional convex bodies.

 Let $V(A_1,\ldots, A_n)$  denote the mixed volume of the convex bodies $A_1,\ldots, A_n\subset\RR^n$. We denote  by $\Val^\infty_i$ the subspace of $i$-homogeneous smooth translation-invariant valuations and  by $*$ the convolution of valuations. For precise definitions and further background on valuations, see Section~\ref{sec:prelim}.
We call a valuation $\phi$ non-positive and  write $\phi\leq 0$ if   $\phi(A)\leq 0$ for all convex bodies $A$. Recall also that the complex conjugate of a valuation is by definition $ \overline \phi(A)= \overline{\phi(A)}$.

\begin{theorem}\label{thm:MixedConvolution} Let $C_1,\ldots, C_{n-1} \subset \RR^n$ be convex bodies with smooth boundary with positive Gauss curvature and consider the valuations
$$\psi_i(A)= V(A[n-1], C_i),\quad i=1,\dots,n-1.$$
Then the following properties hold:
 \begin{enuma}
  \item Hard Lefschetz theorem: The map $\Val_{n-1}^\infty \to \Val^\infty_{1}$ defined by 
  \begin{equation}\label{eq:LConvolution}\phi \mapsto  \phi * \psi_1*\cdots *\psi_{n-2}\end{equation}
  is an isomorphism of topological vector spaces.
  \item Hodge-Riemann relations: If $\phi\in\Val_{n-1}^\infty $ is co-primitive in the sense that 
  $$ \phi * \psi_1*\cdots *\psi_{n-1} =0,$$
  then 
  $$  \overline{ \phi} * \phi  * \psi_1*\cdots * \psi_{n-2}\leq 0$$
  and equality holds if and only if $\phi=0$. 
 \end{enuma}

\end{theorem}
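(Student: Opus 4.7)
The plan is to reduce Theorem~\ref{thm:MixedConvolution} to the classical Aleksandrov--Fenchel inequality for smooth convex bodies, via the standard representation of degree-$(n-1)$ valuations by smooth functions on $S^{n-1}$ together with the theory of mixed discriminants.

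Every $\phi\in\Val_{n-1}^\infty$ admits a representation
\[
\phi(A)=\int_{S^{n-1}} g(u)\,dS_{n-1}(A;u),
\]
with a smooth density $g$ that is unique modulo the restrictions to $S^{n-1}$ of linear functions on $\RR^n$ (for which $\phi_g=0$). An explicit computation in the Bernig--Fu convolution algebra then identifies, up to a dimensional constant,
\[
\phi*\psi_{C_1}*\cdots*\psi_{C_{n-1}}=\int_{S^{n-1}} g\,dS(C_1,\dots,C_{n-1})
\]
with $dS$ the mixed surface area measure, and
\[
\overline{\phi}*\phi*\psi_{C_1}*\cdots*\psi_{C_{n-2}}=\int_{S^{n-1}} \overline{g}\cdot L_{C_1,\dots,C_{n-2}}(g)\,d\sigma,
\]
where $L_{C_1,\dots,C_{n-2}}(g)=D\bigl(Q_g,Q_{C_1},\dots,Q_{C_{n-2}}\bigr)$ is the second-order operator built from the mixed discriminant $D$ of the symmetric matrices $Q_K=h_K I+\nabla^2 h_K$ on $S^{n-1}$, with $\nabla^2$ the spherical Hessian.

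In these terms, statement (b) becomes the classical Aleksandrov inequality for smooth convex bodies with positive Gauss curvature: the Hermitian form $g\mapsto\int\overline g\cdot L_{C_1,\dots,C_{n-2}}(g)\,d\sigma$ has signature $(1,\infty)$ on $C^\infty(S^{n-1})$ modulo linear functions. The one positive direction is spanned by $h_{C_{n-1}}$, with positivity amounting to the classical fact $V(C_{n-1},C_{n-1},C_1,\dots,C_{n-2})>0$; orthogonality to this direction is precisely the co-primitivity condition $\int g\,dS(C_1,\dots,C_{n-1})=0$. The remaining directions are non-positive by G{\aa}rding's inequality for mixed discriminants of positive-definite matrices, applicable because the $Q_{C_i}$ are pointwise positive definite by the positive Gauss curvature hypothesis. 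The equality clause of (b) is the Aleksandrov equality analysis, which under the strict positivity of $Q_{C_1},\dots,Q_{C_{n-2}}$ confines the kernel of the form to the linear functions, i.e.\ $\phi=0$.

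Given (b), part (a) follows easily. Injectivity of the Lefschetz map $L$ is immediate: if $L(\phi)=0$ then $\overline\phi*L(\phi)=0$, so the equality clause of (b) forces $\phi=0$. For the topological isomorphism claim, I would invoke continuity of $L$ between the nuclear Fr\'echet spaces $\Val_{n-1}^\infty$ and $\Val_1^\infty$, together with homotopy invariance of the Fredholm index along a path of $(C_1,\dots,C_{n-2})$ joining the given bodies to Euclidean balls, for which $L$ is already an isomorphism by \cite{Kotrbaty:HR}. The principal obstacle in the proof is the equality clause of (b): ruling out nontrivial kernel elements of the mixed-discriminant operator requires exploiting in full the strict positivity of the Gauss curvatures of $C_1,\dots,C_{n-2}$.
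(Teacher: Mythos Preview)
Your treatment of part (b) is essentially the paper's own argument: represent $\phi\in\Val_{n-1}^\infty$ by a smooth function $g\in\calH^\perp$ via $\phi(A)=\int g\,dS_{n-1}(A)$, use \eqref{eq:convolutionMV} to identify $\phi*\psi_1*\cdots*\psi_{n-1}$ with (a constant times) $V(g,C_1,\ldots,C_{n-1})$ and $\overline\phi*\phi*\psi_1*\cdots*\psi_{n-2}$ with $V(\overline g,g,C_1,\ldots,C_{n-2})$, then invoke Aleksandrov's Theorem~\ref{thm:elliptic}(c). Your phrasing in terms of G{\aa}rding's inequality and the signature $(1,\infty)$ is really just a restatement of that classical result.

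Your route to part (a), however, diverges from the paper and has a genuine gap. The paper obtains the Hard Lefschetz isomorphism \emph{directly}, not via (b): it sets up a commutative square
\[
L\circ F = c_n\, G\circ D
\]
on $\calH^\perp$, where $F,G\colon\calH^\perp\to\Val_{n-1}^\infty,\Val_1^\infty$ are the (known) topological isomorphisms of Lemma~\ref{lemma:isom}, and $D$ is Aleksandrov's mixed-discriminant operator. By Theorem~\ref{thm:elliptic}(a),(b), $D$ is elliptic self-adjoint with $\ker D=\calH$ and $\im D=\calH^\perp$, hence $D\colon\calH^\perp\to\calH^\perp$ is a topological isomorphism by standard elliptic theory on the sphere. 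It follows that $L$ is one as well. No homotopy or Fredholm index argument is needed.

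Your proposed argument for (a)---injectivity from (b), then surjectivity by deforming $(C_1,\ldots,C_{n-2})$ to balls and invoking homotopy invariance of a Fredholm index---is problematic as stated. The spaces $\Val_{n-1}^\infty$ and $\Val_1^\infty$ are (nuclear) Fr\'echet spaces, not Banach or Hilbert spaces; there is no off-the-shelf Fredholm theory or index-invariance theorem in this setting, and you neither establish that $L$ is Fredholm nor that the deformation path consists of Fredholm operators. One could rescue the idea by transporting everything through $F$ and $G$ to the elliptic operator $D$ on $C^\infty(S^{n-1})$, where Fredholm theory \emph{is} available---but once you do that, the commutative-diagram argument already gives the isomorphism outright, and the homotopy becomes superfluous.
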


 It was shown in \cite[Corollary 8.1]{Kotrbaty:HR} that item (b) of Theorem~\ref{thm:MixedConvolution} directly implies the Aleksandrov-Fenchel inequality. Here we show that conversely Aleksandrov's proof of the Aleksandrov-Fenchel inequality via elliptic-operator theory \cite{Aleksandrov:Theorie4} combined with some rather standard arguments from valuation theory already implies  Theorem~\ref{thm:MixedConvolution}.   This adds to the renewed interest Aleksandrov's proof has attracted in recent years, see, e.g., the works by J.~Abardia and the second-named author \cite{AW:Inequalities}, A.V.~Kolesnikov and E.~Milman \cite{KolesnikovMilman:Lp}, and Y.~Shenfeld and R.~van Handel  \cite{ShenfeldHandel:Bochner}.
That Aleksandrov's proof implies   Theorem~\ref{thm:MixedConvolution} has also been observed independently by S.~Alesker  (private communication).

The Alesker-Fourier transform  is a natural isomorphism of the space of smooth and translation-invariant valuations that intertwines the product and convolution, see Section~\ref{sec:prelim} below.  In this precise sense the two multiplicative structures on valuations are  thus dual to each other.  Using the Alesker-Fourier transform, we deduce from Theorem~\ref{thm:MixedConvolution} the corresponding statements for the product.
The Euler-Verdier involution $\sigma$ appearing below is defined by 
$$ (\sigma \phi)(A) = (-1)^i \phi(-A)$$
if $\phi\in\Val_i$.
We call a valuation $\phi$ non-negative and  write $\phi\geq 0$ if   $\phi(A)\geq 0$ for all convex bodies  $A$.

\begin{theorem}\label{thm:MixedProduct}
Let $C_{i,j}\subset \RR^n$, $i,j=1,\dots, n-1$, be centrally symmetric  convex bodies with smooth boundary with positive Gauss curvature and consider the valuations $$\psi_i(A)= V(A, C_{i,1},\ldots, C_{i,n-1}),\quad i=1,\dots,n-1.$$
Then the following properties hold:
 \begin{enuma}
  \item Hard Lefschetz theorem: The map $\Val_1^\infty \to \Val^\infty_{n-1}$ defined by 
  $$\phi \mapsto \phi \cdot \psi_1\cdots \psi_{n-2}$$
  is an isomorphism of topological vector spaces.
  \item Hodge-Riemann relations: If $\phi\in\Val_1^\infty $ is primitive in the sense that 
  $$ \phi \cdot \psi_1\cdots \psi_{n-1} =0,$$
  then 
  $$\overline{\sigma \phi}  \cdot  \phi  \cdot \psi_1\cdots \psi_{n-2}\geq 0$$
  and equality holds if and only if $\phi=0$. 
 \end{enuma}

\end{theorem}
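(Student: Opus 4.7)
The plan is to deduce Theorem~\ref{thm:MixedProduct} from Theorem~\ref{thm:MixedConvolution} via the Alesker-Fourier transform $\mathbb F\colon\Val^\infty\to\Val^\infty$, a topological isomorphism sending $\Val^\infty_k$ onto $\Val^\infty_{n-k}$ and intertwining product and convolution, $\mathbb F(\phi\cdot\psi)=\mathbb F\phi*\mathbb F\psi$. Writing $\Phi:=\mathbb F\phi$ and $\Psi_i:=\mathbb F\psi_i$, the goal is to reduce both statements to Theorem~\ref{thm:MixedConvolution} applied to $\Phi,\Psi_1,\dots,\Psi_{n-1}$.

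The main technical step is to describe the $\Psi_i$. Central symmetry of each $C_{i,j}$ makes $\psi_i$ an even valuation, so $\Psi_i$ lies in the even part of $\Val^\infty_{n-1}$. Using the description of $\mathbb F$ on even degree-one valuations through the spherical cosine transform, together with the representation of smooth even $(n-1)$-homogeneous valuations via support functions of centrally symmetric bodies, I expect to show that $\Psi_i=c_i\,V(\cdot[n-1],D_i)$ for some positive constant $c_i$ and some centrally symmetric convex body $D_i$---essentially a mixed projection body associated with $C_{i,1},\dots,C_{i,n-1}$. The smoothness and strict positivity of the Gauss curvature of $\partial D_i$ should follow from the corresponding properties of the $C_{i,j}$ via the standard regularity theory of mixed surface area measures, so that each $\Psi_i$ (up to the positive factor $c_i$) falls within the class of Lefschetz operators to which Theorem~\ref{thm:MixedConvolution} applies.

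Granting this identification, part (a) follows at once by conjugating the hard Lefschetz isomorphism of Theorem~\ref{thm:MixedConvolution}(a) by $\mathbb F$ and absorbing the positive constants $c_i$. For part (b), the primitivity $\phi\cdot\psi_1\cdots\psi_{n-1}=0$ corresponds under $\mathbb F$ to the co-primitivity $\Phi*\Psi_1*\cdots*\Psi_{n-1}=0$, so Theorem~\ref{thm:MixedConvolution}(b) yields
\[\overline\Phi*\Phi*\Psi_1*\cdots*\Psi_{n-2}\leq 0,\]
with equality iff $\Phi=0$. To transport this back through $\mathbb F$, I will use the Alesker identity $\mathbb F^2=\sigma$ together with the Fourier-conjugation relation $\overline{\mathbb F\phi}=\mathbb F^{-1}\overline\phi$ to express $\mathbb F(\overline{\sigma\phi})$ in terms of $\overline\Phi$. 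A careful sign bookkeeping, combined with the normalization $\mathbb F(\mathrm{vol}_n)=\chi$ (which ensures that $\mathbb F$ preserves signs of top-degree valuations), will then deliver $\overline{\sigma\phi}\cdot\phi\cdot\psi_1\cdots\psi_{n-2}\geq 0$, with equality iff $\phi=0$.

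The principal obstacle lies in the first step: the explicit identification of $\mathbb F\psi_i$ with $c_i\,V(\cdot[n-1],D_i)$ and the verification that $D_i$ has smooth boundary with strictly positive Gauss curvature. It is precisely at this point that the hypotheses of central symmetry and $C^\infty$-regularity with positive curvature on the $C_{i,j}$ enter essentially into the argument.
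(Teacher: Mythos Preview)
Your proposal is correct and follows essentially the same route as the paper: identify the Fourier preimage of each $\psi_i$ as $\tfrac12\,V(\,\cdot\,[n-1],D_i)$ with $D_i=\Pi(C_{i,1},\dots,C_{i,n-1})$ the mixed projection body, verify $D_i$ is of class $C^\infty_+$, and then conjugate Theorem~\ref{thm:MixedConvolution} by $\mathbb F$. Two small caveats: the paper carries out the Fourier identification via Klain functions (equation~\eqref{eq:FFKlain}) rather than the cosine transform, which is slightly cleaner here; and your stated identities $\mathbb F^2=\sigma$ and $\overline{\mathbb F\phi}=\mathbb F^{-1}\overline\phi$ are off by factors of $(-1)^k$ (the Plancherel formula gives $\mathbb F^2\phi(A)=\phi(-A)$, not $\sigma\phi$, and the correct conjugation relation is $\mathbb F(\overline\phi)=(-1)^k\overline{\sigma(\mathbb F\phi)}$), so the ``careful sign bookkeeping'' you anticipate is indeed needed and is where the Euler--Verdier involution in the conclusion originates.
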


Just like Theorem~\ref{thm:MixedConvolution} implies the Aleksandrov-Fenchel inequality, it is natural to ask whether its dual version Theorem~\ref{thm:MixedProduct} can as well be given a geometric meaning.  To this end, inspired by the geometric inequalities recently discovered by S.~Alesker \cite{Alesker:Kotrbaty} we introduce in Definition~\ref{def:higherMixed} below  an array of new numerical invariants for tuples of convex bodies. Their elementary properties are analogous to those of the usual mixed volume (see Proposition~\ref{prop:higherMixed}) which is, in fact, included as a special case.  From this point of view it seems justified to regard these quantities  as \emph{higher rank mixed volumes}.

In order to emphasize that our definition does not depend on  the choice of inner product, we work in an abstract vector space.  In this connection, observe that the Euclidean structure is  also completely irrelevant for Theorems~\ref{thm:MixedConvolution} and \ref{thm:MixedProduct}; for a metric-free  formulation of these results see Remarks~\ref{rem:Metricfree1} and \ref{rem:Metricfree2} below.

\begin{definition}
\label{def:higherMixed}
 Let $W$ be an $n$-dimensional real vector space with a fixed positive Lebesgue measure $\vol_W$  and let $l\in\{2,\dots,n\}$. Let $\Delta_l\colon W\to W^l$ be the diagonal embedding and let $\vol_{\coker \Delta_l}$ be the Lebesgue measure on $\coker \Delta_l$ induced by the exact sequence 
 $$ 0 \longrightarrow 
 \im \Delta_l\longrightarrow W^l \longrightarrow \coker \Delta_l \longrightarrow 0,$$
(see Section~\ref{sec:higherMixed} for details).
 For $i=1,\dots,l$, consider the map $f_i\colon W\to \coker \Delta_l$ given by the composition of the inclusion 
 $W\hookrightarrow W^l$  into the $i$-th  summand with the canonical map $W^l\to \coker \Delta_l$. 
 
Let $n=k_1+\cdots +k_l$ be a partition into positive integers. Given $(n-k_i)$-tuples $\calA_i=(A_{i,1},\ldots,A_{i,n-k_i})$, $i=1,\dots,l$, of convex bodies, we define their \emph{mixed volume of rank $l-1$ (corresponding to the partition $n=k_1+\cdots +k_l$)} by 
 \begin{align}
 \label{eq:higherMixed}
 \begin{split}
  &\widetilde V(\calA_1,\ldots,\calA_l) \\
&\qquad= V_{\coker \Delta_l}\big(f_1(A_{1,1}), \ldots , f_1(A_{1,n-k_1}),  \ldots, f_l(A_{l,1}), \ldots,  f_l(A_{l,n-k_l})\big).
  \end{split}
 \end{align}
\end{definition}

We will see in Proposition~\ref{prop:higherMixedUsual} that the usual mixed volume is  essentially just the mixed volume of rank    $1$; in fact, it is recovered for any next-to-minimal length partition  $n=k+(n-k)$. In the present article, however, our focus lies on the opposite margin of Definition \ref{def:higherMixed}, namely, on  rank $n-1$ corresponding to a single partition $n=1+\cdots+1$. In certain special cases, the corresponding higher rank mixed volume admits a simple geometric description in terms of the  usual mixed volume of convex bodies in the dual space $W^*$ (see Proposition~\ref{prop:higherMixedSpecial}), but in general we are not aware of such a reduction.

Our third main result is that  the mixed volume of rank  $n-1$ satisfies an inequality that is formally analogous to the Aleksandrov-Fenchel inequality. In fact, even an analogue of the determinantal improvement of the Aleksandrov-Fenchel inequality which is due to G.C.~Shephard \cite{Shephard:MixedVolumes} holds as follows.

\begin{theorem}\label{thm:DAF}
  Let $W$ be an $n$-dimensional real vector space with a fixed positive  Lebesgue measure. Let $\calA_i$, $i=1,\ldots,m$, and $\calC_i$, $i=1,\ldots,n-2$, be $(n-1)$-tuples of convex bodies in  $W$. Assume further that the bodies in $\calA_m$ and $\calC_i$, $i=1,\dots,n-2$, are centrally symmetric. Then the $m\times m$ matrix $(v_{ij})$ given by
 $$ v_{ij}= \widetilde V( \calA_i, - \calA_j,  \calC_{1},\ldots, \calC_{n-2}),$$
 where $ - \calA_j= (-A_{j,1},\ldots,-A_{j,n-1})$, satisfies
 $$ (-1)^m \det(v_{ij}) \leq 0.$$
\end{theorem}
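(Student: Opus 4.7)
The plan is to deduce Theorem~\ref{thm:DAF} from the mixed Hodge-Riemann relations for the Alesker product in Theorem~\ref{thm:MixedProduct} via a Hodge-index argument on $\Val_1^\infty$. First, a standard Hausdorff-metric approximation (preserving central symmetry where required) reduces the problem to the case in which every body in every tuple has smooth boundary with positive Gauss curvature; the continuity of $\widetilde V$ is immediate from Definition~\ref{def:higherMixed}. To each tuple I then associate the corresponding smooth $1$-homogeneous valuation $\phi_i(A) := V(A, A_{i,1}, \ldots, A_{i,n-1})$ and $\psi_k(A) := V(A, C_{k,1}, \ldots, C_{k,n-1})$. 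Since each body in $\calA_m$ is centrally symmetric, $-\calA_m = \calA_m$ as tuples, so the valuation $\psi_{n-1} := \phi_m$ is generated by a smooth centrally symmetric tuple and Theorem~\ref{thm:MixedProduct} applies to the full list $\psi_1, \ldots, \psi_{n-1}$.

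The next step is the translation of the matrix $(v_{ij})$ into the Alesker algebra. Unfolding Definition~\ref{def:higherMixed} yields the identification
\[
\phi_i \cdot \phi_j' \cdot \psi_1 \cdots \psi_{n-2} = c_n\,\widetilde V(\calA_i, -\calA_j, \calC_1, \ldots, \calC_{n-2})\,\vol = c_n v_{ij}\,\vol
\]
in $\Val_n^\infty = \RR\cdot \vol$, for a universal positive constant $c_n$ depending only on $n$, where $\phi_j'(A) := V(A, -A_{j,1}, \ldots, -A_{j,n-1})$. A direct calculation from the definition of the Euler-Verdier involution gives $\sigma\phi_j = -\phi_j'$, and therefore the Hermitian form $H(\phi, \phi') := \overline{\sigma\phi'} \cdot \phi \cdot \psi_1 \cdots \psi_{n-2}$ from Theorem~\ref{thm:MixedProduct}(b) satisfies $H(\phi_i, \phi_j) = h_{ij}\,\vol$ with $h_{ij} = -c_n v_{ij}$. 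Consequently, the target inequality $(-1)^m \det(v_{ij}) \le 0$ is equivalent to $\det(h_{ij}) \le 0$.

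To prove the latter I invoke a standard Hodge-index argument. Theorem~\ref{thm:MixedProduct}(a) implies that the primitive subspace $\mathrm{Prim} := \ker(\phi \mapsto \phi \cdot \psi_1 \cdots \psi_{n-1})$ has codimension one in $\Val_1^\infty$, and Theorem~\ref{thm:MixedProduct}(b) says that $H$ is positive definite on $\mathrm{Prim}$. A rank-one perturbation (or Cauchy interlacing) argument then shows that on any finite-dimensional subspace $H$ has at most one non-positive eigenvalue. At the same time, the diagonal entry $v_{mm} = \widetilde V(\calA_m, \calA_m, \calC_1, \ldots, \calC_{n-2})$ is strictly positive, as a mixed volume of full-dimensional convex bodies in $\coker \Delta_n$; hence $h_{mm} < 0$ and the Gram matrix $(h_{ij})$ has at least one strictly negative eigenvalue. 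Its signature is therefore $(m-1, 1, 0)$ or it is degenerate, and $\det(h_{ij}) \le 0$ in both cases.

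The main technical obstacle is the Alesker-product identification in the second paragraph: while analogous formulas for small numbers of tuples are essentially available in \cite{Kotrbaty:HR, Alesker:Kotrbaty}, expressing the Alesker product of $n$ smooth mixed-volume-type $1$-homogeneous valuations cleanly in terms of the higher rank mixed volume---and verifying positivity of the constant $c_n$---requires a direct computation based on the cokernel description in Definition~\ref{def:higherMixed}. Once this identification is in place, the remaining passage from the Hodge-Riemann signature information to the determinantal inequality is the standard Hodge-index recipe.
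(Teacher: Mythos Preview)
Your proposal is correct and follows essentially the same route as the paper. The identification you flag as the main obstacle is exactly Proposition~\ref{prop:higherMixed2}, which the paper proves separately and then quotes; the remaining Hodge-index step is carried out in the paper by explicitly projecting $\phi_1,\ldots,\phi_{m-1}$ onto the primitive subspace via $\mu_i=\phi_i-\tfrac{v_{im}}{v_{mm}}\phi_m$ and row-reducing $(v_{ij})$ to a block form, which is just a concrete rendering of your signature argument.
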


 Observe that for $m=2$ Theorem~\ref{thm:DAF} yields the inequality
\begin{align}\label{eq:daf2}\begin{split}  &\widetilde V( \calA_1, - \calA_2,  \calC_{1},\ldots, \calC_{n-2})^2 \\
 &\qquad \geq \widetilde V( \calA_1, - \calA_1,  \calC_{1},\ldots, \calC_{n-2}) \widetilde V( \calA_2, - \calA_2,  \calC_{1},\ldots, \calC_{n-2}).\end{split}
\end{align}
 It  may be  illuminating to express special cases of  \eqref{eq:daf2} in more familiar terms. 
If $\dim W=2$, then \eqref{eq:daf2} is equivalent to the Aleksandrov-Fenchel inequality in the plane where  one of the two bodies is centrally symmetric (see Proposition~\ref{prop:higherMixedUsual}).
 If $\dim W=3$, however,  choosing $C_{1,1}=C_{1,2}$ to be $2$-dimensional, we obtain  the following, apparently new inequality between mixed volumes.

\begin{corollary}\label{cor:R3} Let $W$ be a $3$-dimensional real vector space with a fixed positive Lebesgue measure. For a linear functional  $\xi\in W^*$, consider the graphing map $\b\xi\maps{ W}{W\times \RR}$, $\b\xi(w)=\big(w,\xi(w)\big)$.
Then for all convex bodies $A_1,A_2, B_1,B_2\subset W$ with $B_1,B_2$ centrally symmetric  we have
\begin{align*}  V \big(A_1,A_2,  \b\xi (B_1), &\b\xi(B_2)\big)^2  \geq  V \big(A_1,A_2, \b\xi (A_1),\b\xi(A_2)\big) V \big(B_1,B_2, \b\xi (B_1),\b\xi(B_2)\big).
 \end{align*}
\end{corollary}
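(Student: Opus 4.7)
The strategy is to derive Corollary~\ref{cor:R3} from inequality~\eqref{eq:daf2} by a dimension-reduction argument. Apply Theorem~\ref{thm:DAF} with $n=3$, $m=2$, $\calA_1=(A_1,A_2)$, $\calA_2=(B_1,B_2)$, and $\calC_1=(C,C)$, where $C$ is any centrally symmetric convex body of positive area contained in the $2$-dimensional subspace $V=\ker\xi\subset W$. Since $-\calA_2=\calA_2$, \eqref{eq:daf2} reads
\[
 \widetilde V(\calA_1,\calA_2,\calC_1)^2 \geq \widetilde V(\calA_1,-\calA_1,\calC_1)\,\widetilde V(\calA_2,\calA_2,\calC_1),
\]
so it suffices to match each of these rank-$2$ mixed volumes with the corresponding term in Corollary~\ref{cor:R3}, up to a shared positive factor.

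Identify $\coker\Delta_3\cong W\oplus W$ via $[(w_1,w_2,w_3)]\mapsto(w_1-w_3,\,w_2-w_3)$, so that $f_1(w)=(w,0)$, $f_2(w)=(0,w)$, and $f_3(w)=(-w,-w)$. Both copies of $f_3(C)$ lie inside the $2$-dimensional subspace $f_3(V)$, and consequently the standard dimension-reduction formula for mixed volumes (compare the setting of Proposition~\ref{prop:higherMixedSpecial}) splits each of the three rank-$2$ mixed volumes above as $\binom{6}{2}^{-1}\vol_{f_3(V)}(f_3(C))^2$ times the $4$-dimensional mixed volume, taken in the quotient $(\coker\Delta_3)/f_3(V)$, of the projections of the remaining four bodies. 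The functional $\xi$ identifies this quotient canonically with $W\times\RR$: under the identification, $f_1(A)$ projects to $A\times\{0\}$, whereas $f_2(A)$ projects to $\{(-a,\xi(a)):a\in A\}$.

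A short case analysis completes the identification. For $\widetilde V(\calA_1,-\calA_1,\calC_1)$, the substitution $-A_i$ in the $f_2$-slot yields the set $\{(a,-\xi(a)):a\in A_i\}$, i.e.\ the graph of $-\xi$ over $A_i$; for the remaining two mixed volumes, central symmetry of the $B_i$ rewrites $\{(-b,\xi(b)):b\in B_i\}$ as $\{(b,-\xi(b)):b\in B_i\}$ after the change $b\mapsto -b$. The volume-preserving reflection $(w,t)\mapsto(w,-t)$ of $W\times\RR$ then converts every graph of $-\xi$ to the corresponding graph $\b\xi(\cdot)$ while leaving the bodies in $W\times\{0\}$ pointwise fixed, so the $4$-dimensional mixed volumes are unaffected. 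Cancelling the common positive constant $\binom{6}{2}^{-1}\vol_V(C)^2$ from both sides of \eqref{eq:daf2} then produces precisely the inequality of Corollary~\ref{cor:R3}.

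The main obstacle is none of the individual steps---each is either an invocation of Theorem~\ref{thm:DAF}, an instance of the classical dimension-reduction for mixed volumes, or an SL-invariance argument---but rather the careful bookkeeping of the reflections and substitutions needed to recognise that the natural image of $f_2$ in the quotient $(\coker\Delta_3)/f_3(V)$ coincides, after the above straightening, with the graphing map $\b\xi$.
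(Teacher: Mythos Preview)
Your argument is correct and mirrors the paper's own route: the paper deduces Corollary~\ref{cor:R3} as the case $n=3$ of Theorem~\ref{thm:AFfunctionals}, whose proof rests on Lemma~\ref{pro:MVfunctionals}, and that lemma performs exactly the quotient-by-$f_3(\ker\xi)$ reduction you carry out by hand (via Lemma~\ref{lem:volumeProjection}, not Proposition~\ref{prop:higherMixedSpecial}). Two cosmetic remarks: the common factor should be $\binom{6}{2}^{-1}\vol_{f_3(V)}\!\big(f_3(C)\big)$ rather than its square (this is irrelevant since it cancels), and you should note at the outset that one may assume $\xi\neq 0$ by continuity, as otherwise $\ker\xi=W$ and the reduction degenerates.
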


It is a consequence of the Aleksandrov-Fenchel inequality that
\begin{equation}\label{eq:afR4} V(A,A,B,B)^2\geq   V(A,A,A,A) V(B,B,B,B)\end{equation}
 holds for all convex bodies $A,B$   in a $4$-dimensional space. 
If $A$ and $B$ lie in hyperplanes, then   the right-hand side of \eqref{eq:afR4} is zero and the inequality becomes trivial. From this perspective, Corollary~\ref{cor:R3} may be viewed as a  strengthening of  a classical consequence of the Aleksandrov-Fenchel inequality for lower dimensional convex bodies.   For a generalization of  Corollary~\ref{cor:R3} to an arbitrary dimension see Theorem~\ref{thm:AFfunctionals} below.  We do not aim at developing this any further here, but let us mention that mixed volumes of lower-dimensional convex bodies  arise in interesting combinatorial applications of the Aleksandrov-Fenchel inequality \cite{ShenfeldHandel:Extremals,Stanley:TwoApplications}.

Let us  briefly comment on the relationship between the geometric inequalities above and the ones recently discovered by S.~Alesker.  First, inequality  \eqref{eq:daf2} directly  implies  \cite[Corollary~3.6]{Alesker:Kotrbaty}.  Second, \cite[Theorem 1.1, item (2)]{Alesker:Kotrbaty} easily follows by applying \eqref{eq:daf2} to the Blaschke sum $A\#(-A)$. 
Finally, \cite[Theorem 1.1, item (1)]{Alesker:Kotrbaty} is a special case of Theorem~\ref{thm:DMVminus}.

\subsection*{Acknowledgements}  Thanks are due to S.~Alesker, A.~Bernig, E.~Lutwak, and V.~Milman for their helpful comments.

\section{Preliminaries} \label{sec:prelim}

For the convenience of the reader we collect here several facts from valuation theory and convex geometry that will be needed later.  

\subsection{Valuation theory} The standard references for this material are Chapter 6 of  Schneider's book \cite{Schneider:BM}, the book \cite{Alesker:Kent} and the  lecture notes \cite{AleskerFu:Barcelona} by Alesker, and the original papers on the product, convolution, and Fourier transform of valuations \cite{Alesker:Product,BernigFu:Convolution,Alesker:Fourier}. 

\subsubsection{Continuous and smooth valuations}

 Let $W$ be  an $n$-dimensional real vector space and let $\calK(W)$ denote the space of convex bodies, i.e., non-empty compact convex subsets, in $W$. 
A valuation is  a function $\phi\colon \calK(W)\to \CC$ such that
$$ \phi(A\cup B) = \phi(A)+ \phi(B)-\phi(A\cap B)
$$
 holds for any $A,B\in\calK(W)$ whenever $A\cup B$ is convex. We denote by $\Val(W)$ the space of translation-invariant continuous valuations, equipped with the topology of uniform convergence on compact subsets. If no confusion can  arise, we  abbreviate $\Val=\Val(W)$.  According to McMullen's decomposition theorem,
$$\Val(W)= \bigoplus_{i=0}^n  \Val_i(W),$$
where $\Val_i(W)$ is the subspace of $i$-homogeneous valuations. This grading may be further refined by considering  even and odd valuations with respect to the reflection in the origin, i.e., satisfying
$\phi(-A)= \phi(A)$ and $\phi(-A)=-\phi(A)$, respectively.

The dense  subspace $\Val^\infty(W)\subset \Val(W)$ of smooth valuations is by definition the space of smooth vectors of the natural  $GL(W)$-action on $\Val(W)$.  It comes equipped with a natural  Frech\'et space topology and also satisfies McMullen's decomposition 
$$\Val^\infty(W)= \bigoplus_{i=0}^n  \Val_i^\infty(W),$$
where $\Val_k^\infty(W)=\Val_k(W)\cap\Val^\infty(W)$. If $C\subset \RR^n$ is a  convex body of class $C^\infty_+$, i.e., has a smooth boundary with positive Gauss curvature, then $\phi(A)= \vol_n (A+C)$ is a smooth valuation. Consequently, also the  valuations $\phi(A)=  V(A[i],C_1, \ldots, C_{n-i})$, where $C_1,\ldots, C_{n-i}\in\calK(W)$ are of class $C^\infty_+$, are smooth.

\subsubsection{Alesker product} 
The Alesker product is  the continuous, bilinear map $$\Val^\infty(W)\times \Val^\infty(W)\to \Val^\infty(W)$$ 
uniquely characterized by
$$ \vol_W(\Cdot + A) \cdot \vol_W(\Cdot + B) =  \vol_{W\times W} (\Delta(\Cdot) + A\times B),$$
where $A,B\in\calK(W)$ are of class $C^\infty_+$, $\vol_W$ is a Lebesgue measure on $W$, $\vol_{W\times W}$ is the corresponding product measure, and $\Delta\colon W\to W\times W$ denotes the diagonal embedding. Equipped with the product, the space $\Val^\infty(W)$ becomes an associative, commutative graded algebra with the Euler characteristic $\chi$ as multiplicative identity. The Euler-Verdier involution $\sigma\colon \Val^\infty(W)\to \Val^\infty(W)$, defined by 
$$  (\sigma \phi)(A) = (-1)^k \phi(-A)$$
if $\phi$ is homogenous of degree $k$, is an algebra  automorphism for the Alesker product.

\subsubsection{Bernig-Fu convolution}
 
 The space of translation-invariant (or Haar) measures on $W$ is $1$-dimensional and  we denote by $Dens(W)$ its complexification. The elements of $Dens(W)$ are called  densities or Lebesgue measures on $W$. Note that if $\vol_W\in Dens(W)$ is a non-zero Lebesgue measure and $\vol_W^*$ is the dual basis of $Dens(W)^*$, then 
$$ \vol_W(\Cdot+ A) \otimes \vol_W^* \in \Val^\infty(W) \otimes Dens(W)^*$$
does not depend on the choice of $\vol_W$. The Bernig-Fu convolution is the unique continuous, bilinear map
$$\Val^\infty(W) \otimes Dens(W)^*\times \Val^\infty(W) \otimes Dens(W)^*\to \Val^\infty(W) \otimes Dens(W)^*$$
satisfying
$$ \big(\vol_W(\Cdot + A)\otimes \vol_W^*\big) * \big(\vol_W(\Cdot + B)\otimes \vol_W^* \big)=  \vol_{W} (\Cdot + A+ B)\otimes \vol_W^*$$
for any convex bodies $A,B\subset W$ of class $C^\infty_+$.  Equipped with the convolution, $\Val^\infty(V)\otimes Dens(W)^*$ is an associative, commutative graded algebra with the multiplicative identity $\vol_W\otimes \vol_W^*$. We will use repeatedly that for any convex body $C\subset W$ of class $C^\infty_+$ and any smooth valuation $\phi\in\Val^\infty(W)$, 
\begin{equation}\label{eq:convolutionDt} \big(V(\Cdot[n-1],C) * \phi\big)(A) = \frac 1 n \dt \phi(A+t C),\quad A\in\calK(W),\end{equation}
 Moreover,  for any $B_1,\dots,B_k,C_1,\dots,C_l\in\calK(W)$ of class $C^\infty_+$ with $k+l\leq n$, we have
\begin{align}
\label{eq:convolutionMV}
\begin{split}
&V(\Cdot[n-k],B_1,\dots,B_k)*V(\Cdot[n-l],C_1,\dots,C_l)\\
&\qquad=\frac{(n-k)!(n-l)!}{(n-k-l)!n!} V(\Cdot[n-k-l],B_1,\dots,B_k,C_1,\dots,C_l).
\end{split}
\end{align}
Here we  have for simplicity fixed a Lebesgue measure $\vol_W$ which yields the identification $Dens(W)^*\cong\CC$.

\subsubsection{Alesker-Fourier transform}

\label{sss:AFT}

 The two multiplicative structures on valuations introduced above are in a precise sense dual to each other; namely, there is an isomorphism of topological vector spaces
  $$ \FF \colon \Val^\infty(W)\to \Val^\infty(W^*)\otimes Dens(W),$$
 called the Alesker-Fourier transform, which has the following properties:
  \begin{enuma}
   \item $\FF$ commutes with the natural actions of $GL(W)$.
   \item $\FF$ is an isomorphism of graded algebras when the source is equipped with the Alesker product and the target with the Bernig-Fu convolution.
   \item Under the natural identification $Dens(W)\otimes Dens(W^*)\cong \CC$, the Plancherel-type formula holds for $\FF$; namely, for any $A\in\calK(W)$, one has
   $$ \FF^2 \phi(A)=\phi(-A).$$
  \end{enuma}

 Let us fix a Euclidean inner product on $W$ and thus identify $W\cong W^*\cong\RR^n$. Then the Alesker-Fourier transform $\FF \colon \Val^\infty\to \Val^\infty$ satisfies, in particular,
\begin{equation} \label{eq:FFeuler}
 \FF(\chi)= \vol_n \quad \text{ and } \quad \FF(\vol_n)=\chi.
\end{equation}
It is important to note that while  $\FF$ preserves the parity of a valuation, it does not preserve the subspace of real-valued valuations; more precisely,  for any $\phi\in \Val_k^\infty$ we have 
\begin{equation} \label{eq:FFconjuation}
 \FF (\overline{ \phi}) =(-1)^k \overline{\sigma (\FF \phi)},
\end{equation}
 see \cite[Lemma 6.1]{Kotrbaty:HR}.

 In the case of even valuations, the Alesker-Fourier transform admits a simple geometric description. Let $\phi\in\Val_{k}^\infty(W)$ be even and let $E\subset W$  be a $k$-dimensional subspace.  By Hadwiger's characterization of volume, the restriction of $\phi$ to $ \calK(E)$ satisfies
$$ \phi|_E\in Dens(E).$$
 Under the identification $W\cong \RR^n$, we thus get a function $\Klain_\phi \colon \Grass_k(\RR^n)\to \CC$ on the Grassmannian of $k$-planes in $\RR^n$, the Klain function of $\phi$, defined by
$$ \phi|_E= \Klain_\phi(E) \vol_k, \quad  E\in \Grass_k(\RR^n).$$
It is an important result of Klain \cite{Klain:Even} that $\phi$ is uniquely determined by its Klain function. Finally, the Klain function of the Alesker-Fourier transform  of $\phi$ is 
\begin{equation}\label{eq:FFKlain} \Klain_{\FF \phi}(E^\perp) = \Klain_\phi(E),\end{equation}
where $\perp$ denotes the orthogonal complement.

\subsection{Convex geometry}

 Let us now proceed to recall some statements from general convexity theory, in particular about mixed volumes and mixed area measures. For reference and further background on convex geometry we refer the reader to Schneider's book \cite{Schneider:BM}.

\subsubsection{Support function}
 Let $S^{n-1}\subset\RR^n$ be the unit sphere. We denote by $h_A\maps{ S^{n-1}}{\RR}$ the support function of a convex body $A\subset \RR^n$. Sometimes, depending on the context, the same symbol will also denote its $1$-homogeneous extension to $\RR^n$.  We will need that if $E\subset \RR^n$ is a linear subspace and $P_E\colon \RR^n\to E$ the orthogonal projection, then 
\begin{equation}\label{eq:supportProj}h_{P_E A} (x)= h_A(P_E x), \quad x\in \RR^n.\end{equation}

\subsubsection{Mixed area measures}

To any convex body $A\subset\RR^n$, a non-negative measure $S_{n-1}(A)$ on the unit sphere, called the area measure of $A$, is assigned as follows:
 $$S_{n-1}(A)(U)= \vol_{n-1}(\{ x\in \partial A\colon \text{some }u\in U \text{ is an outward normal to } A \text{ at }x\}).$$
 Further, the area measures $S_{i}(A)$ of orders $i=0,\ldots,n-2$ are given by 
 \begin{equation} \label{eq:Steiner}S_{n-1}(A+ rD^n) = \sum_{i=0}^{n-1} \binom{n-1}{i} r^{n-1-i} S_i(A), \quad r\geq0,\end{equation} 
 where $D^n\subset\RR^n$ is the Euclidean unit ball.

Observe that $S_{n-1}$ is homogeneous of degree $n-1$. Accordingly, the mixed area measure $S(A_1,\dots, A_{n-1})$ of  convex bodies $A_1,\dots, A_{n-1}\subset\RR^n$ is defined to be the polarization of $S_{n-1}$. It is non-negative, symmetric under permutations of its arguments,   satisfies 
$S(A,\ldots, A)= S_{n-1}(A)$, and is related to the mixed volume by
\begin{equation}\label{eq:mixedarea}
 V(A_1,\ldots,A_n) = \frac{1}{n} \int_{S^{n-1}} h_{A_n}\, dS(A_1,\ldots, A_{n-1}).
\end{equation}

\subsubsection{Mixed projection body}

 In what follows we will use the notation $V_k$ to emphasize that the mixed volume is taken in a $k$-dimensional  linear subspace. Recall that the mixed projection body $\Pi(A_1\ldots, A_{n-1})$ of  $A_1,\ldots, A_{n-1}\in\calK(\RR^n)$ is defined  in terms of its support function as follows:
\begin{equation}\label{eq:DefMixedProj} h_{\Pi(A_1\ldots, A_{n-1})}(u)= V_{n-1}( P_{u^\perp} A_1, \cdots, P_{u^\perp} A_{n-1}),\quad u\in S^{n-1},\end{equation}
where $P_{u^\perp } \colon \RR^n\to u^\perp$ is the orthogonal projection.  Observe that since by \eqref{eq:mixedarea}
\begin{align*}   V_{n-1}( P_{u^\perp} A_1, \cdots, P_{u^\perp} A_{n-1})&  =  \frac n 2  V( [-u,u], A_1,\ldots, A_{n-1}) \\
 & =\frac12 \int_{S^{n-1}} |\langle u,v\rangle| \,dS(A_1,\ldots A_{n-1}),
\end{align*} 
\eqref{eq:DefMixedProj} indeed defines a support function.

\subsubsection{Blaschke addition}

Related to the notion of area measure is  further a binary operation on the subset of full-dimensional convex bodies  that is defined as follows. According to  Minkowski's existence theorem~\cite[Thm~8.2.1]{Schneider:BM}, for any  full-dimensional convex bodies $A,B\in\calK(W)$, there exists $C\in\calK(W)$ with
\begin{align*}
S_{n-1}(C)=S_{n-1}(A)+S_{n-1}(B),
\end{align*}
after some choice of identification $W\cong \RR^n$.
Such $C$, which is moreover unique up to translations, is denoted by $A\# B$ and called the Blaschke sum of $A$ and $B$.

\subsubsection{Mixed discriminants}

 For convex bodies of class $C^\infty_+$, the mixed area measure, and hence the mixed volume, can be described in terms of the mixed discriminant of the Hessians of their support functions.

Let $g_{S^{n-1}}$ denote the Riemannian metric on $S^{n-1}$ induced from $\RR^n$ and let  $\nabla$ be  its Levi-Civita connection.  For each smooth function $f$ on the sphere,   $\nabla^2 f + f g_{S^{n-1}}$ is  a symmetric $2$-tensor field. At each point of the sphere, this gives a symmetric bilinear form on the tangent space  and we may consider its determinant. The area measure of a  convex body $A\subset\RR^n$ of class $C^\infty_+$ is then simply
$$  d S_{n-1}(A)=\det\big(\nabla^2 h_A + h_A g_{S^{n-1}}\big) \,du,$$
where $du$ is the spherical Lebesgue measure.  Further, we conclude from \eqref{eq:Steiner} and $\tr (\nabla^2 f + f g_{S^{n-1}})= \Delta_{S^{n-1}} f + (n-1) f$, where $\Delta_{S^{n-1}}$ is the Laplace-Beltrami operator,  that

\begin{equation}\label{eq:area1}  dS_1(A) = \left(\frac{1}{n-1}\Delta_{S^{n-1}} h_A+  h_A \right)du. \end{equation}
Finally, for any smooth functions $f_1,\ldots, f_{n-1}$ on $S^{n-1}$ and each point $u\in S^{n-1}$  we define
$ D(f_1,\ldots, f_{n-1})(u)$  to be the  mixed discriminant of the  following symmetric bilinear forms:
$$\left.\nabla^2 f_i +  f_i g_{S^{n-1}}\right|_u,\quad  i=1,\ldots, n-1.$$
With this terminology, for any $A_1,\dots,A_{n-1}\in\calK(\RR^n)$ of class $C^\infty_+$ we have
{
\begin{equation}\label{eq:mixedareaD} dS(A_1,\ldots, A_{n-1})=D(h_{A_1},\ldots, h_{A_{n-1}})\, du.\end{equation}

\section{Proof of Theorem~\ref{thm:MixedConvolution}}

  First of all, recall that since the mixed volume, in each argument, is Minkowski additive it extends naturally to a linear functional on differences of support functions. In particular,  since every smooth function on the sphere can be expressed as the difference of two support functions, the mixed volume is well defined on $C^\infty(S^{n-1})$, see \cite[Section 5.2]{Schneider:BM}. 
By $\CC$-linear extension we may just as well assume the smooth functions  to take values in the complex numbers. With this terminology,  Aleksandrov  deduced the  Aleksandrov-Fenchel inequality from the following theorem.

\begin{theorem}[Aleksandrov  \cite{Aleksandrov:Theorie4}]\label{thm:elliptic}
Let $C_1,\ldots, C_{n-1} \subset \RR^n$ be convex bodies of class $C^\infty_+$ and let $f\colon S^{n-1}\to \CC$ be a smooth function.
 \begin{enuma}
\item If $ D(f, h_{C_1},\ldots, h_{C_{n-2}})=0$, then $f$ is the restriction of a linear functional on $\RR^n$.
\item Consider $D\colon C^\infty(S^{n-1})\to C^\infty(S^{n-1})$ given by $ Df = D(f, h_{C_1},\ldots, h_{C_{n-2}})$. $D$ is an elliptic operator, self-adjoint with respect to the standard $L^2$-inner product. Consequently, $\im D = (\ker D)^\perp$. 
\item If $V(f,C_1,\ldots, C_{n-1})=0$, then  $V( \overline f,f,C_1,\ldots, C_{n-2})\leq 0$ and equality holds if and only if $f$ is the restriction of a linear functional on $\RR^n$.
 \end{enuma}
\end{theorem}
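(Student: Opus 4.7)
The plan is to prove the three parts in the order (b), (a), (c). Part (b) is a direct local calculation combined with standard elliptic theory on compact manifolds; part (a) is the deep content and will be the main obstacle; part (c) then follows from (a), (b), and a Lorentz-geometric argument applied to the quadratic form defined by $D$.

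For part (b), write $Df$ pointwise as the mixed discriminant of $M_f := \nabla^2 f + f g_{S^{n-1}}$ with the fixed positive-definite tensors $M_i := \nabla^2 h_{C_i} + h_{C_i} g_{S^{n-1}}$, the positivity being guaranteed by $C_i \in C^\infty_+$. By multilinearity, $D$ is a second-order linear operator whose principal symbol at $(u,\xi)$ with $\xi \neq 0$ is the mixed discriminant of $\xi \otimes \xi$ with $M_1|_u,\ldots,M_{n-2}|_u$. Applying the matrix determinant lemma to $\det\bigl(\sum t_i M_i + s\, \xi\xi^T\bigr)$ and extracting the $s\, t_1 \cdots t_{n-2}$ coefficient shows this symbol equals a positive multiple of $\xi^T \widehat M \xi$, where $\widehat M$ is a positive-definite mixed adjugate of the $M_i$; hence the symbol is positive for $\xi \neq 0$ and $D$ is elliptic. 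For self-adjointness, combining \eqref{eq:mixedarea} with \eqref{eq:mixedareaD} gives
\begin{equation*}
  V(A,B,C_1,\ldots,C_{n-2}) = \frac{1}{n}\int_{S^{n-1}} h_A\cdot Dh_B\, du
\end{equation*}
for $A,B$ of class $C^\infty_+$. Symmetry in $A,B$ plus the fact that every smooth function on $S^{n-1}$ is a difference of support functions yields $\int f\cdot Dg\, du = \int g\cdot Df\, du$ for all $f,g\in C^\infty(S^{n-1})$ by $\mathbb{C}$-linear extension. The identity $\im D = (\ker D)^\perp$ is then standard Hodge-type theory for elliptic self-adjoint operators on a closed manifold.

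For part (a), any linear functional $f(u) = \langle v,u\rangle$ lies in $\ker D$: such $f$ is the support function of the point $\{v\}$, whence $M_f \equiv 0$ on $S^{n-1}$ and $Df = 0$ regardless of the $C_i$. The converse inclusion is the heart of the theorem. My approach would be to connect $(C_1,\ldots,C_{n-2})$ to $(D^n,\ldots,D^n)$ through a smooth path in $(C^\infty_+)^{n-2}$. At the unit ball, \eqref{eq:area1} shows that $D$ is a positive multiple of $\tfrac{1}{n-1}\Delta_{S^{n-1}} + 1$, whose kernel, by the spherical harmonic decomposition, is exactly the space $L$ of degree-one harmonics, i.e., linear functionals. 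Along the path, the kernel of the time-$t$ operator $D_t$ always contains the $n$-dimensional space $L$, and by analytic perturbation theory for elliptic self-adjoint families, the eigenvalues of $D_t$ depend continuously on $t$. Showing that no additional eigenvalue can collapse onto zero---equivalently, that $\dim \ker D_t = n$ throughout the homotopy---is the main difficulty. It requires simultaneously controlling the signature of the Hermitian form $Q$ introduced below, since the only mechanism for the kernel to grow is an eigenvalue crossing zero, which would change the count of positive eigenvalues. This coupling of (a) with (c) is characteristic of Aleksandrov's original argument, and it is at this step that I would follow him most closely.

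For part (c), by (a) and (b) the space $C^\infty(S^{n-1})$ splits $L^2$-orthogonally as $L \oplus L^\perp$, with $D$ restricting to a self-adjoint isomorphism of $L^\perp$. The Hermitian form
\begin{equation*}
  Q(f,g) := \frac{1}{n}\int_{S^{n-1}} \bar f \cdot Dg\, du = V(\bar f, g, C_1,\ldots,C_{n-2})
\end{equation*}
descends to a nondegenerate form on $L^\perp$, and the homotopy argument from (a) shows that $Q$ has exactly one positive eigenvalue (inherited from the constant-function eigenspace at the ball), with all other nonzero eigenvalues negative. Since $Q(h_{C_{n-1}},h_{C_{n-1}}) = V(C_{n-1},C_{n-1},C_1,\ldots,C_{n-2}) > 0$, the vector $h_{C_{n-1}}$ is timelike. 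The hypothesis $V(f,C_1,\ldots,C_{n-1}) = 0$ reads $Q(f,h_{C_{n-1}}) = 0$; decomposing $f$ along the eigenbasis of $D$ and invoking the reverse Cauchy--Schwarz inequality for $Q$-orthogonality to a timelike vector then yields $Q(f,f) \leq 0$, with equality precisely when $f \in L$, i.e., when $f$ is the restriction of a linear functional. This completes the proof.
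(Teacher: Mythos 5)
First, note that the paper does not prove this statement at all: it is quoted as Aleksandrov's theorem, with the original article and H\"ormander's book cited for the proof, and only the supplementary claim $\im D=(\ker D)^\perp$ is justified (as a general fact about self-adjoint elliptic operators). Your proposal therefore attempts more than the paper does, and its architecture -- ellipticity and self-adjointness of $D$ from the mixed-discriminant representation of the mixed area measure, the explicit diagonalization at the unit ball via spherical harmonics, a deformation of $(C_1,\dots,C_{n-2})$ to $(D^n,\dots,D^n)$ controlling the spectrum, and finally the hyperbolic (reverse Cauchy--Schwarz) argument for the form $Q(f,g)=V(\bar f,g,C_1,\dots,C_{n-2})$ -- is exactly the architecture of Aleksandrov's proof. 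Parts (b) and (c), and the easy inclusion in (a) (linear functionals lie in $\ker D$ because $\nabla^2 f+fg_{S^{n-1}}\equiv 0$ for $f=\langle v,\cdot\rangle$), are correctly argued.

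The genuine gap is precisely where you flag it, and flagging it is not the same as closing it. Openness of the property ``$\dim\ker D_t=n$ and $D_t$ has exactly one positive eigenvalue'' along the deformation follows from continuity of the spectrum together with $Q_t(h_B,h_B)>0$; the crux is \emph{closedness}: at a limit time $t_*$ one only gets $\ker D_{t_*}\supseteq L$ and at most one positive eigenvalue, and one must exclude the existence of $0\neq f\perp L$ with $D_{t_*}f=0$. Note that such an $f$ satisfies $Q_{t_*}(f,f)=0$ and $Q_{t_*}(f,h_B)=0$, so no signature count by itself produces a contradiction; Aleksandrov's argument at this point is a further perturbation (replacing $h_{C_1}$ by $h_{C_1}+sf$ for small real $s$, which keeps the operator elliptic, and deriving a second positive eigenvalue), and nothing in your sketch substitutes for it. As written, part (a) -- and with it the signature claim on which part (c) rests -- is asserted rather than proved. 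Since the statement is explicitly attributed to Aleksandrov, deferring to his paper is legitimate in context, but then the honest formulation is a citation (as the paper gives), not a proof.
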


\begin{remark}
An English translation of the article \cite{Aleksandrov:Theorie4} can be found in Aleksandrov's collected works  \cite{Aleksandrov:Collected}; for a modern exposition see also  H\"ormander's book \cite{Hormander:Convexity}. The equality  $\im D = (\ker D)^\perp$ is not explicitly stated (since it is not used), but this is just a fundamental property of elliptic operators, see, e.g., \cite[Theorem III.5.5]{LawsonMichelson:Spin}.
\end{remark}

\begin{lemma}
\label{lemma:isom}Let  $\calH\subset C^\infty(S^{n-1})$ be the subspace of restrictions of  linear functionals on $\RR^n$.  Consider $F\maps{ \calH^\perp}{\Val^\infty_{n-1}}$ and $G\maps{ \calH^\perp}{\Val^\infty_{1}}$ defined by 
$$ Ff(A)= \int_{S^{n-1}} f\, dS_{n-1}(A),\quad A\in\calK(\RR^n)$$
and $$ Gf(A)= \int_{S^{n-1}} h_A(u) f(u) \,du,\quad A\in\calK(\RR^n),$$respectively. Then $F$ and $G$ are isomorphisms of topological vector spaces. 
 
\end{lemma}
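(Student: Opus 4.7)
The plan is to verify continuity, injectivity, and surjectivity of both $F$ and $G$, and then invoke the open mapping theorem between Fréchet spaces to promote the resulting continuous bijections to topological isomorphisms. Continuity and the fact that the images lie in the stated valuation spaces follow directly from the integral representations and standard properties of the area measure; for $G$, translation invariance of $Gf$ is equivalent to $\int_{S^{n-1}}\langle u,t\rangle f(u)\,du=0$ for every $t\in\RR^n$, which is precisely the condition $f\in\mathcal{H}^\perp$.

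For injectivity of $G$, I would use that every smooth function on $S^{n-1}$ is the difference of two smooth support functions: given $\psi\in C^\infty(S^{n-1})$, the function $\psi+c\,h_{D^n}$ has spherical Hessian $\nabla^2\psi+\psi g_{S^{n-1}}+c\,g_{S^{n-1}}$, which is positive definite for $c$ sufficiently large and hence represents the support function of some body of class $C^\infty_+$. Consequently $\int h_A f\,du=0$ for all $A$ forces $\int\psi f\,du=0$ for every $\psi\in C^\infty(S^{n-1})$, hence $f=0$. For injectivity of $F$, I would apply $Ff=0$ to $A+rD^n$ and use the Steiner formula \eqref{eq:Steiner} to match powers of $r$, obtaining $\int f\,dS_i(A)=0$ for every $i$ and every $A$. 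Taking $i=1$ and invoking \eqref{eq:area1}, self-adjointness of $\Delta_{S^{n-1}}$, and the density of support functions established above yields $\bigl(\tfrac{1}{n-1}\Delta_{S^{n-1}}+1\bigr)f=0$; the kernel of this elliptic operator coincides with the space of first-order spherical harmonics, which equals $\mathcal{H}$, so $f\in\mathcal{H}^\perp$ forces $f=0$.

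The main obstacle will be surjectivity. My plan is to invoke the classical representation theorems: every $\phi\in\Val_1^\infty$ (resp.\ $\Val_{n-1}^\infty$) admits a unique generating function $f\in\mathcal{H}^\perp\cap C^\infty(S^{n-1})$ with $\phi=Gf$ (resp.\ $\phi=Ff$). These are the smooth counterparts of classical results in the Schneider-McMullen theory and can be extracted from Alesker's work on smooth translation-invariant valuations; moreover, the two statements are dual to each other via the Alesker-Fourier transform, which preserves the grading and interchanges product with convolution, so it suffices to establish one of them directly. Once surjectivity is in hand, the open mapping theorem for the Fréchet spaces $\mathcal{H}^\perp$, $\Val_1^\infty$, and $\Val_{n-1}^\infty$ upgrades continuity of the inverses to yield the topological isomorphisms.
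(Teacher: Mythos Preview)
Your outline is essentially correct, but the route differs from the paper's and one of your remarks is wrong. The paper dispatches $F$ by a direct citation (to \cite[Theorem~4.1]{SchusterW:Generalized} or \cite[Theorem~A.2]{BPSW:Log}) and then handles $G$ by a trick you do not use: since $\calH$ and $\calH^\perp$ are precisely the kernel and image of the elliptic operator $\tfrac{1}{n-1}\Delta_{S^{n-1}}+I$, each $f\in\calH^\perp$ can be written as $\big(\tfrac{1}{n-1}\Delta_{S^{n-1}}+1\big)g$ with $g\in\calH^\perp$, and then self-adjointness together with \eqref{eq:area1} gives $Gf(A)=\int_{S^{n-1}} g\,dS_1(A)$. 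Thus $G$ is exhibited as the composition of two known topological isomorphisms of $\calH^\perp$, and no separate injectivity, surjectivity, or open-mapping argument is needed. Your direct injectivity arguments are fine but become redundant once the representation theorems you invoke are stated in their full form (they already contain uniqueness and continuity of the inverse). The incorrect point is your Fourier-duality aside: $\FF\circ F$ is \emph{not} a scalar multiple of $G$. For even $f$ one computes $\Klain_{\FF(Ff)}(\RR u)=\Klain_{Ff}(u^\perp)=2f(u)$, whereas $\Klain_{Gf}(\RR u)=\tfrac12\int_{S^{n-1}}|\langle u,v\rangle|f(v)\,dv$, so the two differ by a cosine transform; the Alesker--Fourier transform therefore does not let you deduce surjectivity of $G$ from that of $F$. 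The reduction that actually works is the elliptic-operator one above.
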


\begin{proof} For $F$, the assertion is well known,   see, e.g.,  \cite[Theorem 4.1]{SchusterW:Generalized} or \cite[Theorem A.2]{BPSW:Log} for a proof.

As for the operator $G$, observe that $\calH$ and $\calH^\perp$ are precisely the kernel and the image of the elliptic operator  $\Delta_{S^{n-1}} + (n-1) I$, and thus to each $f\in \calH^\perp$ there exists $g\in \calH^\perp$ with
$$ \frac{1}{n-1}\Delta_{S^{n-1}} g    +g= f.$$ 
Using  that the Laplace-Beltrami operator is self-adjoint  together with \eqref{eq:area1}, for any convex body $A$ of class $C^\infty_+$ we have 
$$ Gf(A) 
  = \int_{S^{n-1}} \left(\frac{1}{n-1}\Delta_{S^{n-1}}h_A+  h_A\right) g \,du = \int_{S^{n-1}} g \,dS_1(A).
$$ The claim follows now from \cite[Theorem 4.1]{SchusterW:Generalized} or \cite[Theorem A.2]{BPSW:Log}.
 \end{proof}

 \begin{remark} Observe that, according to \eqref{eq:mixedarea}, an alternative expression for $F$ is
\begin{align*}
Ff=nV(\Cdot[n-1],f).
\end{align*}

\end{remark}

\begin{proof}[Proof of Theorem~\ref{thm:MixedConvolution}]
Consider the operator $D\maps{ C^\infty(S^{n-1})}{C^\infty(S^{n-1})}$ given by $ Df = D(f, h_{C_1},\ldots, h_{C_{n-2}})$. According to Theorem~\ref{thm:elliptic},  $D$ is self-adjoint,  its kernel is the subspace of restrictions of linear functionals on $\RR^n$, and $\im D = (\ker D)^\perp$. Let further $L\maps{\Val_1^\infty}{ \Val^\infty_{n-1}}$ be  defined by \eqref{eq:LConvolution} and consider the following diagram:
\begin{center}
\begin{tikzpicture}
  \matrix (m) [matrix of math nodes,row sep=3em,column sep=4em,minimum width=2em]
  {
     \im D  &  \im D \\
     \Val_{n-1}^\infty & \Val_{1}^\infty \\};
  \path[-stealth]
    (m-1-1) edge node [right] {$F$} (m-2-1)
            edge node [above] {$D$} (m-1-2)
    (m-1-2) edge node [right] {$G$} (m-2-2)
     (m-2-1) edge node [above] {$L$}(m-2-2);
\end{tikzpicture}
\end{center}
 Since by Theorem~\ref{thm:elliptic} (a) and Lemma~\ref{lemma:isom}, $F$, $G$, and $D$ are isomorphisms,  the Hard Lefschetz theorem follows once we show that the diagram commutes (possibly up to a normalizing constant).

To this end, using \eqref{eq:convolutionMV}, \eqref{eq:mixedarea}, \eqref{eq:mixedareaD}, and the self-adjointness of $D$, for any convex body $A\subset\RR^n$ of class $C^\infty_+$ we indeed have
\begin{align*}
\big((L\circ F) f\big)(A) &=c_nV(f,A,C_1,\dots,C_{n-2})\\
&  = c_n \int_{S^{n-1}}  f \,dS(A,C_1,\ldots, C_{n-2})\\
 &  = c_n \int_{S^{n-1}}  f D(h_A) \,du\\
 & = c_n \int_{S^{n-1}}  h_A Df \,du\\
 & = c_n \big((G\circ D) f\big)(A),
\end{align*}
where $c_n$ is a non-zero constant which may possibly change its value from line to line but always depends entirely on the dimension.

To prove the Hodge-Riemann relations, recall first from Lemma \ref{lemma:isom} that each $\phi\in\Val_{n-1}^\infty$ can be written uniquely as $\phi=Ff$ for some $f\in\calH^\perp$. Observe further that, according to \eqref{eq:convolutionMV},
$$ \phi * \psi_1*\cdots *\psi_{n-1}=\tilde c_nV(f,C_1,\ldots, C_{n-1})$$
 and similarly 
$$  \overline{\phi} * \phi * \psi_1*\cdots *\psi_{n-2}=\tilde c_n V(\overline f, f, C_1,\ldots, C_{n-2}) ,$$ with some positive real constant $\tilde c_n$ depending only on the dimension. Now the claim follows at once from Theorem~\ref{thm:elliptic} (c).

\end{proof}

\begin{remark}\label{rem:Metricfree1}
 To obtain a formulation of Theorem~\ref{thm:MixedConvolution} that is independent of a choice of Euclidean inner product and  Lebesgue measure, one has to make the following adjustments. First,  the Lefschetz operators should be defined through
$$\psi_i(A)= V(A[n-1], C_i)\otimes \vol_W^*,\quad i=1,\dots,n-1,$$
where the mixed volume $V$ is normalized by a 
non-zero Lebesgue measure $ \vol_W$ on $W$. Note that $\psi_i$ does not depend on the choice of Lebesgue measure. Second, a valuation $\phi\in \Val(W)\otimes Dens(W^*)$ is defined to be non-positive if
$\phi(A,B)\leq 0$ for all convex bodies $A\subset W$ and $B\subset W^*$. 

\end{remark}

\section{Proof of Theorem~\ref{thm:MixedProduct}}

\label{s:MixedProduct}

\begin{lemma}
 If $A_1, \ldots,A_{n-1}\subset \RR^n$ are convex bodies of class $C^\infty_+$, then 
 so is the mixed projection body $\Pi(A_1,\ldots,A_{n-1})$.
\end{lemma}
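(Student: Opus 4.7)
The plan is to express the support function of the mixed projection body as a cosine transform of a smooth strictly positive density on the sphere and then verify separately the two defining conditions of class $C^\infty_+$: smoothness of the support function on $S^{n-1}$ and pointwise positive definiteness of $\nabla^2 h+h\,g_{S^{n-1}}$. First I would combine \eqref{eq:DefMixedProj}, \eqref{eq:mixedarea}, and \eqref{eq:mixedareaD} to obtain
$$h_{\Pi(A_1,\ldots,A_{n-1})}(u)=\frac{1}{2}\int_{S^{n-1}}|\langle u,v\rangle|\,\rho(v)\,dv,\quad u\in S^{n-1},$$
with $\rho=D(h_{A_1},\ldots,h_{A_{n-1}})$. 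By the $C^\infty_+$ hypothesis on each $A_i$, every form $\nabla^2 h_{A_i}+h_{A_i}g_{S^{n-1}}$ is pointwise positive definite on the $(n-1)$-dimensional tangent space; since the mixed discriminant of $n-1$ positive definite symmetric bilinear forms on an $(n-1)$-dimensional space is strictly positive, $\rho$ is smooth and strictly positive on $S^{n-1}$.

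For smoothness of $h_\Pi$ I would apply the substitution $v\mapsto -v$ on $\{\langle u,v\rangle<0\}$ to rewrite the integral in the form $\int_{\{\langle u,v\rangle>0\}}\langle u,v\rangle\,\tilde\rho(v)\,dv$, where $\tilde\rho(v)=\tfrac12(\rho(v)+\rho(-v))$ is smooth, and then invoke differentiation under the integral sign. At the first step the boundary contribution vanishes because $\langle u,v\rangle\,\tilde\rho(v)$ vanishes on $\{\langle u,v\rangle=0\}$, while at each subsequent step the coarea formula produces smooth boundary terms on the $(n-2)$-dimensional equator $u^\perp\cap S^{n-1}$; iterating shows that $h_\Pi\in C^\infty(S^{n-1})$.

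The main step is to verify that $\nabla^2 h_\Pi+h_\Pi g_{S^{n-1}}$ is pointwise positive definite. I would work with the $1$-homogeneous extension $H_\Pi(u)=\tfrac12\int|\langle u,v\rangle|\,\rho(v)\,dv$, whose Euclidean Hessian at a unit vector $u_0$, restricted to $T_{u_0}S^{n-1}=u_0^\perp$, coincides with $(\nabla^2 h_\Pi+h_\Pi g_{S^{n-1}})|_{u_0}$. Using the distributional identity $\frac{d^2}{dt^2}|a+tb|\big|_{t=0}=2b^2\delta(a)$, or equivalently applying the coarea formula to the first derivative computed above, one obtains for every $\xi\in u_0^\perp$
$$\big(\nabla^2 h_\Pi+h_\Pi g_{S^{n-1}}\big)\big|_{u_0}(\xi,\xi)=\int_{u_0^\perp\cap S^{n-1}}\langle\xi,v\rangle^2\,\rho(v)\,d\sigma(v),$$
where $\sigma$ is the induced spherical Lebesgue measure on the equator. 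Since $\rho>0$ and $\langle\xi,\cdot\rangle^2$ vanishes only on the proper subsphere $\xi^\perp\cap u_0^\perp\cap S^{n-1}$, the right-hand side is strictly positive whenever $\xi\neq 0$, yielding the desired positive definiteness.

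The main obstacle is the rigorous justification of this second-derivative computation: the kernel $|\langle u,v\rangle|$ fails to be smooth precisely on the zero-measure set $u_0^\perp\cap S^{n-1}$, which might naively suggest a vanishing Hessian, whereas in reality the coarea (equivalently, distributional) argument produces the boundary integral that witnesses the positive Gauss curvature of $\Pi(A_1,\ldots,A_{n-1})$.
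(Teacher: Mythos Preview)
Your argument is correct, and for smoothness you are essentially reproving by hand what the paper simply cites (that the cosine transform maps $C^\infty(S^{n-1})$ to itself). The genuine difference lies in how you obtain the positive definiteness of $\nabla^2 h_\Pi+h_\Pi g_{S^{n-1}}$.

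The paper avoids any direct computation of the Hessian. Instead it observes that since each $A_i$ is of class $C^\infty_+$, one can write $A_i=B_i+\varepsilon D^n$ with $B_i$ still of class $C^\infty_+$; then the Minkowski additivity of the mixed projection body in each argument gives $\Pi(A_1,\ldots,A_{n-1})=B+\widetilde\varepsilon D^n$ for some convex body $B$ with smooth support function, and adding a small ball forces $\nabla^2 h_\Pi+h_\Pi g_{S^{n-1}}$ to be positive definite immediately. This is shorter and uses nothing beyond the basic multilinearity of $\Pi$.

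Your approach computes the Hessian explicitly as the equatorial integral
\[
(\nabla^2 h_\Pi+h_\Pi g_{S^{n-1}})\big|_{u_0}(\xi,\xi)=\int_{u_0^\perp\cap S^{n-1}}\langle\xi,v\rangle^2\,\rho(v)\,d\sigma(v),
\]
and then reads off positivity from $\rho>0$. This formula is correct and in fact yields more: it quantifies the Gauss curvature of $\Pi(A_1,\ldots,A_{n-1})$ in terms of the mixed-discriminant density $\rho$, information the paper's argument does not provide. The cost is the care required in differentiating through the absolute value, which you rightly flag; the coarea/boundary-variation justification you outline is standard and goes through, so there is no genuine gap.
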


\begin{proof}
  It suffices to show that the support function $h= h_{\Pi(A_1,\ldots,A_{n-1})}$ is smooth and that the Hessian  
 $\nabla^2 h+ hg_{S^{n-1}}$ is positive definite at every point of the sphere. First, since the cosine transform
 $ T \colon C^\infty(S^{n-1}) \to C^\infty(S^{n-1})$, 
 $$ Tf (u)= \int_{S^{n-1}} |\langle u,v\rangle | f(v) dv
$$
maps smooth functions to smooth ones and since by \eqref{eq:mixedareaD} the mixed area measure has a smooth density with respect to the spherical Lebesgue measure, it is clear that the support function  is smooth.

 Second, since $A_1, \ldots, A_{n-1}$ have smooth boundaries with positive Gauss curvature, there exist convex bodies $B_1,\dots, B_{n-1}$ with the same property and a Euclidean ball $\varepsilon D^n$ such that 
$A_i= B_i+\varepsilon D^n$ for $i=1,\ldots, n-1$. Since the mixed projection body is Minkowski additive in each argument, we have 
$$ \Pi(A_1,\ldots, A_{n-1}) = \Pi(B_1+\varepsilon D^n,\ldots, B_{n-1}+\varepsilon D^n) = B+ \widetilde\varepsilon D^n,$$
where $B$ is a convex body with smooth support function and $\widetilde\varepsilon D^n$ is another Euclidean ball. This proves that $\nabla^2 h+ hg_{S^{n-1}}$ is positive definite.
\end{proof}

\begin{lemma}
\label{lem:fourierUpsilon}
Let $C_1,\dots,C_{n-1}\subset \RR^n$ be centrally symmetric convex bodies of class $C^\infty_+$. The valuations
$$\psi(A)= V(A, C_1,\ldots, C_{n-1})$$
and
$$\upsilon(A)=\frac12V\big(A[n-1],\Pi(C_1,\dots,C_{n-1})\big)$$
satisfy
 \begin{equation}
 \label{eq:fourierUpsilon} \psi= \FF \upsilon.
 \end{equation}
\end{lemma}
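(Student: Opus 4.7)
My plan is to verify \eqref{eq:fourierUpsilon} by comparing Klain functions, exploiting that both $\psi$ and $\upsilon$ are even so that the simple geometric description of $\FF$ on even valuations (equation \eqref{eq:FFKlain}) applies. First I would note that $\psi\in \Val^\infty_1$ is even because every $C_i$ is centrally symmetric, while $\upsilon\in\Val^\infty_{n-1}$ is even because the mixed projection body $\Pi(C_1,\ldots,C_{n-1})$ is a zonoid and hence always centrally symmetric. Both valuations are smooth by the previous lemma (together with the fact that the support function of a centrally symmetric body of class $C^\infty_+$ is smooth). Since $\FF$ sends $(n-1)$-homogeneous valuations to $1$-homogeneous ones, the degrees match, and by Klain's injectivity theorem for even valuations it suffices to check
$$\Klain_\psi(\RR u)=\Klain_{\FF\upsilon}(\RR u)=\Klain_\upsilon(u^\perp)\quad\text{for every } u\in S^{n-1}.$$

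The key computation is then direct. For the left-hand side, using \eqref{eq:mixedarea} and translation-invariance of the mixed volume, the restriction of $\psi$ to the line $\RR u$ applied to the segment $[0,u]$ gives
$$\psi([0,u])=\tfrac12 V([-u,u],C_1,\ldots,C_{n-1})=\tfrac{1}{n}\,h_{\Pi(C_1,\ldots,C_{n-1})}(u)$$
by the derivation that follows \eqref{eq:DefMixedProj}, so $\Klain_\psi(\RR u)=\tfrac{1}{n}\,h_{\Pi(C_1,\ldots,C_{n-1})}(u)$. For the right-hand side, if $A\subset u^\perp$ is a full-dimensional convex body in the hyperplane, then its $(n-1)$-th area measure (viewed in $\RR^n$) is concentrated at $\pm u$ with total mass $\vol_{n-1}(A)$ each, giving
$$V\big(A[n-1],B\big)=\tfrac{1}{n}\vol_{n-1}(A)\bigl(h_B(u)+h_B(-u)\bigr).$$
Applied to $B=\Pi(C_1,\ldots,C_{n-1})$, which is centrally symmetric, this yields $\upsilon(A)=\tfrac{1}{n}\vol_{n-1}(A)\,h_{\Pi(C_1,\ldots,C_{n-1})}(u)$, hence $\Klain_\upsilon(u^\perp)=\tfrac{1}{n}\,h_{\Pi(C_1,\ldots,C_{n-1})}(u)$, matching the previous Klain value.

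The approach should be essentially routine; the only genuine point to verify is the reduction to Klain functions, which rests on both valuations being even and smooth. The evenness of $\upsilon$ is the one subtlety worth emphasizing, since it is not a consequence of the central symmetry of the $C_i$ alone but of the general fact that mixed projection bodies are zonoids. Once these two Klain-function computations are in place, \eqref{eq:FFKlain} immediately yields $\FF\upsilon=\psi$, completing the proof.
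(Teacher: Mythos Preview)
Your proposal is correct and follows essentially the same approach as the paper: both verify \eqref{eq:fourierUpsilon} by comparing Klain functions via \eqref{eq:FFKlain}, reducing to the identity $\Klain_\psi(\RR u)=\Klain_\upsilon(u^\perp)=\tfrac{1}{n}h_{\Pi(C_1,\ldots,C_{n-1})}(u)$. The only cosmetic difference is that the paper computes $\Klain_\upsilon(u^\perp)$ using the projection formula \cite[Theorem~5.3.1]{Schneider:BM} and then chains the equalities directly to $\Klain_\psi(L_u)$, whereas you compute the two Klain values separately and obtain $\Klain_\upsilon(u^\perp)$ via the area measure $S_{n-1}(A)=\vol_{n-1}(A)(\delta_u+\delta_{-u})$ for $A\subset u^\perp$; these are equivalent computations.
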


\begin{proof}
Observe first that $\psi$ and $\upsilon$ are even and of complementary degree. Fix $u\in S^{n-1}$ and let $B\subset u^\perp$ be a  convex body with $\vol_{n-1}(B)=1$. Let $L_u$ denote the line spanned by $u$.  By \cite[Theorem 5.3.1]{Schneider:BM} and  \eqref{eq:supportProj} we have
 \begin{align*}
  \Klain_{\upsilon}( u^\perp) & = \frac 12 V\big(B[n-1], \Pi( C_{1},\ldots, C_{n-1})\big)\\
  & = \frac{ 1}{2n}  \vol_1 \big( P_{L_u}  \Pi( C_{1},\ldots, C_{n-1}) \big)\\
  & = \frac 1n h_{\Pi( C_{1},\ldots, C_{n-1})} (u)\\
  & =  \frac 1nV_{n-1}( P_{u^\perp} C_{1}, \ldots, P_{u^\perp} C_{n-1})\\
  & =  V([0,u],  C_{1}, \ldots,C_{n-1})\\
  & = \Klain_{ \psi}(L_u)
 \end{align*}
 and the claim thus follows from \eqref{eq:FFKlain}.
\end{proof}

\begin{proof}[Proof of Theorem~\ref{thm:MixedProduct}]  For $i=1,\ldots,n-1$, consider the valuations 
 $$\upsilon_i(A) = \frac 12 V \big(A[n-1], \Pi( C_{i,1},\ldots, C_{i,n-1})\big). $$
 Consider further the operators $L\colon \Val_{n-1}^\infty\to \Val_1^\infty$  and $\Lambda \colon \Val_1^\infty\to \Val_{n-1}^\infty$ defined by $L \phi  = \phi*  \upsilon_1 *\cdots *  \upsilon_{n-2}$ and 
$\Lambda\phi =\phi \cdot  \psi_1 \cdots \psi_{n-2}$. By \eqref{eq:fourierUpsilon} and the fact that the Fourier transform takes the product to the convolution,  the diagram
\begin{center}
\begin{tikzpicture}
  \matrix (m) [matrix of math nodes,row sep=3em,column sep=4em,minimum width=2em]
  {
     \Val_{n-1}^\infty & \Val_{1}^\infty \\
     \Val_{1}^\infty & \Val_{n-1}^\infty \\};
  \path[-stealth]
    (m-1-1) edge node [right] {$\FF$} (m-2-1)
            edge node [above] {$L$} (m-1-2)
    (m-1-2) edge node [right] {$\FF$} (m-2-2)
     (m-2-1) edge node [above] {$\Lambda$}(m-2-2);
\end{tikzpicture}
\end{center}
commutes. Since $\FF$ and $L$ are isomorphisms, so must be $\Lambda$, which proves (a).

 To prove (b), note that using  \eqref{eq:fourierUpsilon}, \eqref{eq:FFeuler}, and \eqref{eq:FFconjuation}  we have 
$$ \phi *  \upsilon_1 *\cdots *  \upsilon_{n-1}=0\ \Longleftrightarrow\ (\FF\phi) \cdot  \psi_1 \cdots \psi_{n-1} = 0$$
and
$$ \overline\phi *  \phi *  \upsilon_1 *\cdots *  \upsilon_{n-2}<0\  \Longleftrightarrow \ \overline{\sigma (\FF \phi)} \cdot (\FF\phi) \cdot  \psi_1 \cdots \psi_{n-2} > 0,$$
and that the assertion thus follows from item (b) of Theorem~\ref{thm:MixedConvolution}.
\end{proof}

\begin{remark}\label{rem:Metricfree2}
 The Euclidean structure enters the formulation of Theorem~\ref{thm:MixedProduct} only at the definition of the Lefschetz operators. To remove this dependence, it is enough to allow the Lefschetz operators to be defined by any positive Lebesgue measure on $W$. 
\end{remark}

\section{Properties of the  higher rank mixed volume}

\label{sec:higherMixed}

 Let us first clarify how the Lebesgue measure is fixed on $\coker \Delta_l$ in Definition~\ref{def:higherMixed}. Let $W_i$, $i=1,2,3$, be finite-dimensional real vector spaces. If 
$$ 0\longrightarrow W_1\longrightarrow W_2\longrightarrow W_3\longrightarrow 0$$
is an exact sequence, there exists a canonical isomorphism 
\begin{equation}\label{eq:densities} Dens (W_2) \otimes Dens(W_1)^* \cong   Dens(W_3),\end{equation}
 see, e.g., \cite[Section 2.1]{Alesker:Fourier} for details. 
 
 Let us apply this  statement to the situation 
 $$ 0 \longrightarrow \im \Delta_l\longrightarrow W^l \longrightarrow \coker \Delta_l \longrightarrow 0,$$
where $\Delta_l\maps{W}{W^l}$ is the diagonal embedding. Any Lebesgue measure $\vol_W$ on $W$ canonically induces, first, a Lebesgue measure $\vol_{\im \Delta_l}$ on $\im \Delta_l$ (since $\Delta_l\maps W{\im \Delta_l}$ is an isomorphism), and second, a Lebesgue measure $\vol_{W^l}$ on $W^l$ (the product measure).  The Lebesgue measure $\vol_{\coker \Delta_l}$ on $\coker \Delta_l$ is then defined as the image of $ \vol_{W^l}\otimes \vol_{\im \Delta_l}^*$ under the isomorphism \eqref{eq:densities}.
 
 The following lemma will be important.
 
\begin{lemma}[Alesker {\cite[Lemma 2.6.1]{Alesker:Fourier}}] \label{lem:volumeProjection} Let $ 0\longrightarrow W_1\stackrel{f}{\longrightarrow} W_2\stackrel{g}{\longrightarrow} W_3\longrightarrow 0$ be exact and let $\vol_{W_1}$ and $\vol_{W_2}$ be Lebesgue measures on $W_1$ and $W_2$.  Let $\vol_{W_3}$ be the image of $\vol_{W_2}\otimes \vol_{W_1}^*$ under the isomorphism \eqref{eq:densities}. Put $n=\dim W_2$ and $k=\dim W_1$. Then for any convex bodies $A\subset W_2$, $B\subset W_1$ we have
$$ \binom{n}{k} V\big(A[n-k], f(B)[k]\big) = \vol_{W_1}(B) \vol_{W_3}\big(g(A)\big),$$
where the mixed volume on the left-hand side is normalized by $\vol_{W_2}$.
\end{lemma}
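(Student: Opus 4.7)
The plan is to reduce the claim to the standard mixed-volume projection formula and then carefully match up the normalizations coming from the canonical isomorphism \eqref{eq:densities}.

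First I would invoke the following classical fact (see, e.g., \cite[Section 5.3]{Schneider:BM}): if $L \subset W_2$ is a $k$-dimensional linear subspace, $K \subset L$ is a convex body, and $A \subset W_2$ is any convex body, then
$$ \binom{n}{k} V\big(A[n-k], K[k]\big) = \vol_L(K) \cdot \vol_{W_2/L}\big(\pi(A)\big), $$
where $\pi \colon W_2 \to W_2/L$ is the canonical quotient map, $\vol_L$ is the restriction of the ambient Lebesgue measure to $L$, and $\vol_{W_2/L}$ is the quotient Lebesgue measure determined by $\vol_{W_2}$ and $\vol_L$. Although this formula is usually stated in a Euclidean setting using an orthogonal complement of $L$, it holds intrinsically in the form above once one interprets the right-hand side via the quotient measure. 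Applying it with $L = f(W_1)$ and $K = f(B)$, and using that exactness of the sequence identifies $W_2/f(W_1)$ with $W_3$ via $g$, I obtain
$$ \binom{n}{k} V\big(A[n-k], f(B)[k]\big) = \vol_{f(W_1)}\big(f(B)\big) \cdot \vol_{W_2/f(W_1)}\big(\pi(A)\big). $$

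Next I would match the two factors with the right-hand side of the claim. Since $f \colon W_1 \to f(W_1)$ is a linear isomorphism, the Lebesgue measure induced on $f(W_1)$ by $\vol_{W_1}$ satisfies $\vol_{f(W_1)}(f(B)) = \vol_{W_1}(B)$. For the remaining factor, I would choose an auxiliary splitting $W_2 = f(W_1) \oplus W'$, observe that it gives a factorization $\vol_{W_2} = \vol_{f(W_1)} \otimes \vol_{W'}$ for a unique Lebesgue measure $\vol_{W'}$, and note that the quotient measure $\vol_{W_2/f(W_1)}$ is precisely the image of $\vol_{W'}$ under the identification $W_2/f(W_1) \cong W'$ coming from the splitting. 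By the very construction of the canonical isomorphism \eqref{eq:densities}, this pushforward agrees with the pushforward of $\vol_{W'}$ to $W_3$ via $g|_{W'}$, and hence equals $\vol_{W_3}$. Consequently $\vol_{W_2/f(W_1)}(\pi(A)) = \vol_{W_3}(g(A))$.

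Combining the two identifications yields the formula. The main point requiring care is the density bookkeeping: one must verify that the intrinsic isomorphism \eqref{eq:densities}, the quotient Lebesgue measure appearing in the classical projection formula, and the pushforward under any chosen splitting all agree. Once this compatibility is checked, the lemma follows at once.
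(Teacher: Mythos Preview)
The paper does not actually prove this lemma; it is quoted verbatim from Alesker \cite[Lemma~2.6.1]{Alesker:Fourier} and used as a black box. So there is no ``paper's own proof'' to compare against. That said, your argument is a correct and standard way to establish the statement: reduce to the classical projection formula for mixed volumes (Theorem~5.3.1 in \cite{Schneider:BM}, already invoked elsewhere in the paper) and then check that the density normalizations match.

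One expository wrinkle: in your first display you write that ``$\vol_L$ is the restriction of the ambient Lebesgue measure to $L$,'' but there is no such restriction without a Euclidean structure. What you actually need---and what you use two lines later---is that for \emph{any} choice of Lebesgue measure $\vol_L$ on $L$, the product $\vol_L(K)\cdot\vol_{W_2/L}(\pi(A))$ is independent of that choice (scaling $\vol_L$ by $c$ scales the quotient measure by $c^{-1}$), so the intrinsic formula is well posed. Then, specializing $\vol_L$ to the pushforward of $\vol_{W_1}$ under $f$, the compatibility of \eqref{eq:densities} with splittings is exactly what pins down $\vol_{W_3}$ on the other factor. You identify this as ``the main point requiring care,'' and it is; once stated cleanly it is immediate from the definition of the isomorphism \eqref{eq:densities}. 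With that phrasing adjusted, the proof is complete.
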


For the rest of this section, $W$ will be an $n$-dimensional vector space and $\vol_W$ a chosen positive Lebesgue measure on it. Similarly, we will keep also the rest of the notation of Definition \ref{def:higherMixed}.

\begin{proposition} \label{prop:higherMixed}
 The higher rank mixed volume defined by \eqref{eq:higherMixed} has the following properties:
\begin{enuma}
 \item $\widetilde V(\calA_{\rho(1)},\dots,\calA_{\rho(l)})=\widetilde V(\calA_1,\dots,\calA_l)$ for any  permutation $\rho\in\fSS_l$.
 \item $\widetilde V(g\calA_1,\ldots, g\calA_n) = \widetilde V(\calA_1,\ldots, \calA_n)$, where $g\calA_i=(gA_{i,1},\dots,gA_{i,n-k_i})$,  for any $g\in SL(W)$.
  \item For fixed $\calA_2,\ldots, \calA_n$, the function  $A\mapsto \widetilde V\big((A[n-k_1]),\calA_2,\ldots,\calA_{l}\big) $ is  a $(n-k_1)$-homogeneous translation-invariant continuous valuation.
 \item $\widetilde V\geq 0$ and $\widetilde V$ is monotonically increasing in each argument.
 \item $\widetilde V(\calA_1,\ldots, \calA_l)>0$ if and only if there exist line segments $S_{i,j} \subset A_{i,j}$ such that 
 \begin{enumi}
  \item $\dim H_i =n-k_i$ for $i=1,\ldots,l$ and 
  \item $\bigcap_{i=1}^l H_i=\{0\}$,
 \end{enumi}
  where  $H_i\subset W$ is the translate of the  affine  hull of $S_{i,1},\ldots, S_{i,n-k_i}$ containing the origin.

\end{enuma}
\end{proposition}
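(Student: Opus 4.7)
The plan is to deduce each of (a)--(e) from the corresponding fact about the ordinary mixed volume on $\coker \Delta_l$, after tracking how the canonical Lebesgue measure $\vol_{\coker \Delta_l}$ transforms under the relevant maps. A basic observation used throughout is that every $f_i\maps{W}{\coker \Delta_l}$ is linear and injective: if $\iota_i(w)\in\im\Delta_l$ for the $i$-th standard inclusion $\iota_i\maps{W}{W^l}$, the diagonal structure immediately forces $w=0$.

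For (a) and (b) the task reduces to exhibiting volume-preserving automorphisms of $\coker \Delta_l$. A permutation $\rho\in\fSS_l$ acts on $W^l$ by permuting factors; this action stabilizes $\im\Delta_l$, restricts there to the identity, and preserves the product measure, so by functoriality of \eqref{eq:densities} the descended map $\widetilde\rho$ preserves $\vol_{\coker \Delta_l}$. Since $\widetilde\rho\circ f_i=f_{\rho(i)}$, symmetry of the ordinary mixed volume yields (a). For $g\in SL(W)$, the diagonal action $g^{(l)}\maps{W^l}{W^l}$ has determinant $(\det g)^l$ and restricts to $g$ on $\im\Delta_l$, so Lemma~\ref{lem:volumeProjection} (equivalently, the multiplicativity built into \eqref{eq:densities}) gives determinant $(\det g)^{l-1}=1$ for the induced map; combined with $\widetilde g\circ f_i=f_i\circ g$, this proves (b). Items (c) and (d) then follow from linearity and injectivity of $f_1$: the former implies that $A\mapsto f_1(A)$ commutes with Minkowski sums, scalings, and translations, and the latter yields $f_1(A\cap B)=f_1(A)\cap f_1(B)$, so the valuation property, $(n-k_1)$-homogeneity, translation invariance, continuity, non-negativity, and monotonicity of $A\mapsto V_{\coker \Delta_l}\big(f_1(A)[n-k_1],\ldots\big)$ are all inherited from the corresponding facts about the ordinary mixed volume.

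The substantive step is (e). The classical positivity criterion in the $(l-1)n$-dimensional space $\coker \Delta_l$, where $(l-1)n=\sum_i(n-k_i)$ matches the total number of arguments, asserts that $\widetilde V(\calA_1,\ldots,\calA_l)>0$ if and only if one can choose line segments $S_{i,j}\subset A_{i,j}$ with direction vectors $v_{i,j}\in W$ such that the family $\{f_i(v_{i,j})\}_{i,j}$ is linearly independent in $\coker \Delta_l$. Lifting any relation $\sum_{i,j}\lambda_{i,j}f_i(v_{i,j})=0$ to $W^l$ shows it is equivalent to the existence of a common $w\in W$ with $\sum_j\lambda_{i,j}v_{i,j}=w$ for every $i$. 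Granted (i) and (ii), any such $w$ lies in each $H_i$, hence in $\bigcap_i H_i=\{0\}$, and (i) then forces all $\lambda_{i,j}=0$, proving independence. Conversely, failure of (i) at some $i_0$ yields a non-trivial dependence within $H_{i_0}$ by taking $w=0$, while failure of (ii) when (i) holds allows any $0\neq w\in\bigcap_i H_i$ to be expanded uniquely as $\sum_j\lambda_{i,j}v_{i,j}$ in each $H_i$, and these expansions together produce the required dependence.

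The main obstacle is (e), specifically the bookkeeping that converts linear independence in the quotient $\coker \Delta_l$ into the two geometric conditions on the affine spans $H_i$; items (a)--(d) are largely formal transcriptions of classical properties of the ordinary mixed volume, once the behaviour of $\vol_{\coker\Delta_l}$ under the relevant maps has been established.
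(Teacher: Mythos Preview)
Your proof is correct and follows essentially the same approach as the paper. The paper treats (c) and (d) with a one-line appeal to the corresponding properties of the ordinary mixed volume, handles (a) and (b) via the descended permutation and diagonal $SL(W)$-actions on $\coker\Delta_l$ exactly as you do, and for (e) cites the classical positivity criterion and then rewrites linear independence of the $f_i(v_{i,j})$ as the subspace condition $\big(\iota_1(H_1)+\cdots+\iota_l(H_l)\big)\cap\im\Delta_l=\{0\}$, together with $\dim H_i=n-k_i$; your coefficient-based formulation of this last step (the common $w\in W$) is simply the elementwise unpacking of the same identity $\big(\sum_i\iota_i(H_i)\big)\cap\im\Delta_l=\Delta_l\big(\bigcap_i H_i\big)$.
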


\begin{proof}  
Items (c) and (d) follow at once from the respective properties of the mixed volume.

To show (a), fix a permutation $\rho\in \fSS_l$ and consider $\rho\colon W^l\to W^l$ defined by $$\rho(w_1,\ldots,w_l) = (w_{\rho(1)},\ldots,w_{\rho(l)}).$$ Since $\rho$ leaves invariant the subspace  $\im\Delta_l\subset W^l$, it descends to  the unique isomorphism $\widetilde \rho \colon\coker \Delta_l \to \coker \Delta_l $ satisfying
$\pi \circ \rho = \widetilde \rho \circ\pi$ for the canonical projection $\pi$. Since $\rho$ is volume-preserving and $\widetilde \rho \circ f_i=f_{\rho(i)}$, the symmetry of $\widetilde V$ follows. 

The proof of (b) is similar, using that $g\times \cdots \times g\maps{W^l}{W^l}$ descends to a map $\widetilde g\colon \coker \Delta_l\to\coker \Delta_l$.  

To prove (e) observe that by \cite[Theorem 5.1.8]{Schneider:BM} we have $\widetilde V(\calA_1,\ldots, \calA_l)>0$ if and only if there exist line segments $S_{i,j} \subset A_{i,j}$ such that $f_i(S_{i,j})$, $i=1,\ldots l$ and $j=1,\ldots, n-k_i$, are line segments with linearly independent directions. The latter is the case if and only if $\dim H_i=n-k_i$ and 
the intersection of $\iota_1(H_1)+ \cdots + \iota_l(H_l)$ with the subspace $\im \Delta_l$ consists precisely of the origin. Since 
$$ \big( \iota_1(H_1)+ \cdots + \iota_l(H_l)\big) \cap \im \Delta_l= \Delta_l\left( \bigcap_{i=1}^l H_i\right),$$
the claim follows.
\end{proof}

\begin{lemma}
\label{lem:coeff}
Let $k_1,\dots,k_m$ be non-negative integers with $k=\sum_{i=1}^mk_i\leq n$. For any $A,C_1,\dots,C_m\in\calK(W)$ we have
\begin{align*}
&\left.\frac{\partial^k}{\partial t_1^{k_1}\cdots\partial t_m^{k_m}}\right|_{t_1=\cdots=t_m=0}\vol_W(A+t_1C_1+\cdots+t_mC_m)\\
&\qquad=\frac{n!}{(n-k)!}V(A[n-k],C_1[k_1],\dots,C_m[k_m]).
\end{align*}
\end{lemma}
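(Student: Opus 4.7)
The plan is to derive this identity as an immediate consequence of Minkowski's theorem on the polynomiality of the volume under Minkowski addition, which is the very statement that defines the mixed volume. Concretely, Minkowski's theorem asserts that for any convex bodies $B_0, B_1, \dots, B_m \in \calK(W)$ and non-negative reals $s_0, t_1, \dots, t_m$, the function
\begin{equation*}
\vol_W(s_0 B_0 + t_1 B_1 + \cdots + t_m B_m)
\end{equation*}
is a homogeneous polynomial of degree $n$ in $(s_0, t_1, \dots, t_m)$, whose coefficients are (up to multinomial factors) the mixed volumes. Specializing $B_0 = A$ and $B_i = C_i$ for $i=1,\dots,m$, and setting $s_0 = 1$, yields
\begin{equation*}
\vol_W(A + t_1 C_1 + \cdots + t_m C_m) = \sum_{\substack{j_0+j_1+\cdots+j_m=n \\ j_i\geq 0}} \binom{n}{j_0, j_1,\dots, j_m} t_1^{j_1}\cdots t_m^{j_m}\, V(A[j_0], C_1[j_1],\dots, C_m[j_m]).
\end{equation*}

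Next I would apply $\partial^{k_1+\cdots+k_m}/(\partial t_1^{k_1}\cdots\partial t_m^{k_m})$ to both sides and evaluate at $t_1=\cdots=t_m=0$. Only the single term with $j_i = k_i$ for $i=1,\dots,m$ (and thus $j_0 = n-k$) survives, and differentiation contributes an additional factor of $k_1!\cdots k_m!$. Combined with the multinomial coefficient
\begin{equation*}
\binom{n}{n-k, k_1,\dots,k_m} = \frac{n!}{(n-k)!\, k_1!\cdots k_m!},
\end{equation*}
the factorials cancel and one is left exactly with $\frac{n!}{(n-k)!} V(A[n-k], C_1[k_1], \dots, C_m[k_m])$, as claimed.

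There is no genuine obstacle here; the statement is a routine bookkeeping consequence of the definition of mixed volumes via the Steiner-type polynomial expansion, see e.g.\ \cite[Theorem 5.1.7]{Schneider:BM}. The only mild care needed is to ensure that the polynomial identity, a priori valid for $t_i \geq 0$, extends to a neighbourhood of the origin so that differentiation at zero is legitimate; this is immediate since both sides are polynomials in $t_1,\dots,t_m$.
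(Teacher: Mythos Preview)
Your proof is correct and rests on the same fact as the paper's, namely the Minkowski polynomial expansion of $\vol_W(A+t_1C_1+\cdots+t_mC_m)$. The only difference is cosmetic: instead of tracking the multinomial coefficients explicitly, the paper simply notes that the two sides are proportional and then reads off the constant by specializing $C_1=\cdots=C_m=A$.
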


\begin{proof} Since the left and right hand side must clearly be proportional, choosing $C_1=\cdots=C_m=A$ yields the  constant.
\end{proof}

\begin{proposition}
\label{prop:higherMixed2}

Let $n=k_1+\cdots+k_l$ be a partition into positive integers. For $i=1,\dots, l$, let $\calC_i=(C_{i,1},\dots,C_{i,n-k_i})$ be an $(n-k_i)$-tuple of convex bodies of class $C^\infty_+$ and consider the valuation
 $$\phi_i(A)=V(A[k_i],C_{i,1},\dots,C_{i,n-k_i}).$$ 
 Then 
 $$ \phi_1\cdots \phi_l=    \frac{k_1!\cdots k_l!(n(l-1))!}{(n!)^l}    \widetilde  V(  \calC_1,  \cdots ,  \calC_l) \vol_W .$$

\end{proposition}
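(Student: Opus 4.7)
The plan is to express each factor $\phi_i$ as an iterated partial derivative of a Steiner-type valuation, use the defining property of the Alesker product to rewrite the product of these Steiner valuations as a single Steiner-type valuation on $W^l$ evaluated on the diagonal, and finally apply Lemma~\ref{lem:volumeProjection} to identify the resulting quantity with $\vol_W\cdot\widetilde V$.

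First, by Lemma~\ref{lem:coeff} applied in $W$ with all exponents equal to one,
$$\phi_i(A) = \frac{k_i!}{n!}\,\frac{\partial^{n-k_i}}{\partial t_{i,1}\cdots\partial t_{i,n-k_i}}\bigg|_{t=0}\vol_W\Bigl(A+\sum_{j=1}^{n-k_i}t_{i,j}C_{i,j}\Bigr).$$
By iterating the defining identity of the Alesker product, for any convex bodies $B_1,\ldots,B_l\subset W$ of class $C^\infty_+$ one has
$$\bigl[\vol_W(\Cdot+B_1)\cdots\vol_W(\Cdot+B_l)\bigr](A) = \vol_{W^l}\Bigl(\Delta_l(A)+\sum_{i=1}^l\iota_i(B_i)\Bigr),$$
where $\iota_i\colon W\hookrightarrow W^l$ denotes the inclusion into the $i$-th summand. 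Since the Alesker product is bilinear and jointly continuous on $\Val^\infty(W)$, it commutes with the iterated partial derivatives above, so
$$(\phi_1\cdots\phi_l)(A) = \prod_{i=1}^{l}\frac{k_i!}{n!}\cdot\frac{\partial^{n(l-1)}}{\partial t_{\ast}}\bigg|_{t=0}\vol_{W^l}\Bigl(\Delta_l(A)+\sum_{i,j}t_{i,j}\iota_i(C_{i,j})\Bigr).$$

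Applying Lemma~\ref{lem:coeff} a second time, now in $W^l$ (with $\dim W^l=nl$, a total of $\sum_i(n-k_i)=n(l-1)$ simple derivatives, and hence $nl-n(l-1)=n$ remaining copies of $\Delta_l(A)$), we obtain
$$(\phi_1\cdots\phi_l)(A) = \frac{k_1!\cdots k_l!\,(nl)!}{(n!)^{l+1}}\,V_{W^l}\Bigl(\Delta_l(A)[n],\{\iota_i(C_{i,j})\}_{i,j}\Bigr).$$
The polarization of Lemma~\ref{lem:volumeProjection}, applied to the exact sequence $0\to\im\Delta_l\to W^l\to\coker\Delta_l\to 0$ with $B=\Delta_l(A)$ (so that $\vol_{\im\Delta_l}(B)=\vol_W(A)$) and with the remaining arguments $\iota_i(C_{i,j})$ (whose images in $\coker\Delta_l$ are precisely $f_i(C_{i,j})$), then yields
$$V_{W^l}\bigl(\Delta_l(A)[n],\{\iota_i(C_{i,j})\}\bigr) = \frac{n!\,(n(l-1))!}{(nl)!}\,\vol_W(A)\,\widetilde V(\calC_1,\ldots,\calC_l).$$
Substituting this into the previous display and collecting the constants yields the claim.

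The one slightly technical point is the commutation of the Alesker product with the iterated partial derivatives with respect to the parameters $t_{i,j}$; this follows from the bilinearity and joint continuity of the product together with the observation that the Steiner-type families involved are polynomials in $t_{i,j}$ of bounded degree, so the matching of coefficients is unambiguous. The polarization of Lemma~\ref{lem:volumeProjection} needed in the final step is routine from the multilinearity of the mixed volume. All remaining manipulations are bookkeeping of the combinatorial constants.
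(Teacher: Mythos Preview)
Your proof is correct and follows essentially the same route as the paper's: express the factors via Lemma~\ref{lem:coeff}, use the $l$-fold Alesker product identity $\psi_{B_1}\cdots\psi_{B_l}=\vol_{W^l}(\Delta_l(\Cdot)+B_1\times\cdots\times B_l)$, reapply Lemma~\ref{lem:coeff} in $W^l$, and finish with Lemma~\ref{lem:volumeProjection}. The only organizational difference is that the paper first treats the diagonal case $\calC_i=(C_i[n-k_i])$ and polarizes at the very end, whereas you carry the distinct bodies $C_{i,j}$ through from the start and invoke the polarized form of Lemma~\ref{lem:volumeProjection} directly; both approaches need that polarization, so neither is materially simpler. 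One small remark: the $l$-fold product identity does not follow quite as mechanically from the two-fold defining property as ``iterating'' suggests (the intermediate factor $\vol_{W^2}(\Delta_2(\Cdot)+B_1\times B_2)$ is not itself of the form $\vol_W(\Cdot+D)$), which is why the paper cites \cite[Lemma~3.1]{Alesker:Kotrbaty} for it; you may want to do the same.
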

\begin{proof}
Let us first consider the special case when $\calC_i=(C_i[n-k_i])$, $i=1,\dots, l$. For any $C\in\calK(W)$ of class $C^\infty_+$ put $\psi_{C}=\vol_W(\Cdot + C)$. By \cite[Lemma 3.1]{Alesker:Kotrbaty} we have 
$$\psi_{C_1}\cdots \psi_{C_l}
 = \vol_{W^l}\big( \Delta_l(\Cdot) + C_1\times \cdots \times C_l\big).$$
  For $j=1,\dots,l$ let further $\iota_j\colon W\hookrightarrow  W^l$ be the inclusion into the $j$-th summand. Since 
 $$ \phi_j = \frac{k_j!}{n!}\left.\frac{d^{n-k_j}}{dt^{n-k_j}}\right|_{t=0} \psi_{tC_j},$$
according to Lemma~\ref{lem:coeff} and Lemma~\ref{lem:volumeProjection} for any $A\in\calK(W)$ we have
 \begin{align*} &  (\phi_1\cdots \phi_l)(A)  =\\
 &=\frac{k_1!\cdots k_l!}{(n!)^{l}} \left.\frac{\partial^{n(l-1)}}{\partial t_1^{n-k_1} \cdots \partial t_l^{n-k_l}}\right|_{t_1=\cdots=t_l=0} \vol_{W^l}\big(\Delta_l(A) + t_1 \iota_1(C_1) + \cdots + t_l \iota_l(C_l)  \big)\\
 &  = \frac{k_1!\cdots k_l!(nl)!}{(n!)^{l+1}}  V_{ W^l}\big(\Delta_l(A)[n], \iota_1(C_1)[n-k_1],\ldots, \iota_l(C_l)[n-k_l]\big)\\
 & =  \frac{k_1!\cdots k_l!(nl)!}{(n!)^{l+1}} \binom{nl}{n}^{-1}  \vol_W(A) V_{\coker\Delta_l}\big( f_1(C_1)[n-k_1],\ldots, f_l(C_l)[n-k_l]\big)\\
 & = \frac{k_1!\cdots k_l!(n(l-1))!}{(n!)^{l}}    \vol_W(A)\widetilde  V(\calC_1,\ldots, \calC_l).
 \end{align*}
The general case now follows by polarization, i.e.,  by replacing $C_i$ by $\sum_{j=1}^{n-k_i}  t_{i,j} C_{i,j}$, expanding both sides into a polynomial in $t_{i,j}\geq 0$, and comparing their coefficients. 

\end{proof}

Finally, let us investigate the relation between $\widetilde V$ and the usual mixed volume. First, we show that  the mixed volume of rank $1$ yields the usual mixed volume.

\begin{proposition}\label{prop:higherMixedUsual}

  Let  $A_1\ldots, A_{n-k},B_1,\dots,B_{k}\subset W$ be convex bodies and write $\calA=(A_1,\ldots, A_{n-k})$ and $\calB=(B_1,\dots,B_{k})$. Then
  \begin{align*}
  \widetilde V(\calA,\calB)=V(A_1,\ldots,A_{n-k},-B_1,\dots, -B_{k}).
  \end{align*}

\end{proposition}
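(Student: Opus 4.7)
The plan is to work out the rank-$1$ case by unwinding Definition~\ref{def:higherMixed} explicitly and identifying $\coker \Delta_2$ with $W$ in a canonical way.

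First I would identify $\coker \Delta_2 \cong W$ via the isomorphism
$$\Phi\colon \coker \Delta_2 \longrightarrow W, \qquad [(w_1,w_2)] \longmapsto w_2 - w_1.$$
This is well defined since $\Delta_2(w) = (w,w)$ is sent to $0$. Under $\Phi$, the two maps $f_i\colon W\to \coker \Delta_2$ become
$$\Phi \circ f_1 = -\mathrm{id}_W \qquad \text{and} \qquad \Phi \circ f_2 = \mathrm{id}_W,$$
because $f_1(w) = [(w,0)] \mapsto -w$ while $f_2(w) = [(0,w)] \mapsto w$.

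Next I would check that $\Phi$ is volume-preserving with respect to the induced Lebesgue measure on $\coker \Delta_2$ and $\vol_W$ on $W$. To see this, take the splitting $W^2 = \im \Delta_2 \oplus (\{0\}\times W)$. With respect to a basis $e_1,\dots,e_n$ of $W$, the basis $\{(e_i,e_i)\}\cup\{(0,e_i)\}$ of $W^2$ differs from the standard product basis by a unipotent change of coordinates, hence has unit determinant. Consequently, under the decomposition $\vol_{W^2} = \vol_{\im \Delta_2}\otimes \vol_{\{0\}\times W}$, the factor $\vol_{\{0\}\times W}$ coincides with the pullback of $\vol_W$ via $(0,w)\mapsto w$. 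Since the composition $\{0\}\times W \hookrightarrow W^2 \to \coker \Delta_2 \stackrel{\Phi}{\to} W$ is precisely this isomorphism, the measure $\vol_{\coker \Delta_2}$ transported via $\Phi$ is exactly $\vol_W$.

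Combining these two observations with Definition~\ref{def:higherMixed},
$$\widetilde V(\calA,\calB) = V_W\big(-A_1,\ldots,-A_{n-k},B_1,\ldots,B_k\big).$$
Finally, applying the linear map $-\mathrm{id}_W$ (which has $|\det| = 1$) simultaneously to all $n$ arguments of the mixed volume leaves the value unchanged, and this rewrites the right-hand side as $V(A_1,\ldots,A_{n-k},-B_1,\ldots,-B_k)$, as required.

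The only nontrivial step is the verification that $\Phi$ preserves Lebesgue measures; everything else is direct bookkeeping. Once this normalization is settled, the identification of $f_1, f_2$ with $\pm\mathrm{id}$ and the $(-1)^n$-invariance of mixed volume finish the argument immediately.
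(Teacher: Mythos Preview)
Your argument is correct and is genuinely different from the paper's. The paper does not unwind Definition~\ref{def:higherMixed} directly; instead it passes through the Alesker product of smooth valuations. After reducing by continuity to the $C^\infty_+$ case, it sets $\phi_1(A)=V(A[k],A_1,\ldots,A_{n-k})$ and $\phi_2(A)=V(A[n-k],B_1,\ldots,B_k)$, and then compares two independent evaluations of $\phi_1\cdot\phi_2$: one from \cite[Proposition~2.2]{Alesker:Product}, which gives $\binom{n}{k}^{-1}V(A_1,\ldots,A_{n-k},-B_1,\ldots,-B_k)\vol_W$, and one from Proposition~\ref{prop:higherMixed2}, which gives $\binom{n}{k}^{-1}\widetilde V(\calA,\calB)\vol_W$.

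Your route is more elementary and entirely self-contained: it needs no valuation theory, no smoothness assumption, and no appeal to Proposition~\ref{prop:higherMixed2} or to Alesker's product formula. The paper's route, on the other hand, reuses machinery already in place and makes the proposition a two-line consequence of that machinery; it also foreshadows how $\widetilde V$ interacts with the product, which is the theme of the section. Both approaches ultimately rest on the same linear-algebraic fact that $\coker\Delta_2\cong W$ in a volume-compatible way, but the paper hides this inside Lemma~\ref{lem:volumeProjection} and the proof of Proposition~\ref{prop:higherMixed2}, whereas you make it explicit.
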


\begin{proof} Without loss of generality we may assume that the convex bodies $A_1\ldots, A_{n-k}$ and  $B_1,\dots,B_{k}$ are of class $C^\infty_+$.
 Consider the valuations
\begin{align*}
\phi_1(A)=V(A[k], A_1,\ldots, A_{n-k})
\end{align*}
 and
 \begin{align*}
 \phi_2(A)=V(A[n-k], B_1,\ldots, B_{n-k}).
 \end{align*} 
According to \cite{Alesker:Product}*{Proposition 2.2},
$$\phi_1\cdot \phi_2= \binom{n}{k}^{-1} V(A_1,\ldots,A_{n-k},-B_1,\dots, -B_{k}) \vol_W.$$
At the same time, by Proposition~\ref{prop:higherMixed2},
$$\phi_1\cdot \phi_2= \binom{n}{k}^{-1}\widetilde V(\calA,\calB)  \vol_W$$
and the claim follows.
\end{proof}

Second, we show that the 
mixed volume of rank   $n-1$, which we will exclusively consider from now on, boils down to the usual mixed volume in a particular situation.

\begin{proposition}\label{prop:higherMixedSpecial} Let $\calC_1,\dots,\calC_{n}$ be $(n-1)$-tuples of convex bodies in $\RR^n$. If the bodies in $\calC_1,\ldots, \calC_{n-1}$ are centrally symmetric, then 
   $$ \widetilde V(\calC_1,\ldots,\calC_n) =  \frac{ n!((n-1)!)^n}{2^n(n(n-1))!}   V(\Pi \calC_1,\ldots, \Pi \calC_n).$$

\end{proposition}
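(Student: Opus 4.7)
The plan is to apply Proposition~\ref{prop:higherMixed2} with $k_1=\cdots=k_n=1$ to identify $\widetilde V(\calC_1,\ldots,\calC_n)$ with (a constant times) the Alesker product $\phi_1\cdots\phi_n$, where
$$\phi_i(A)=V(A,C_{i,1},\ldots,C_{i,n-1})\in\Val_1^\infty,\quad i=1,\ldots,n,$$
and then to compute this product by transporting it via the Alesker-Fourier transform to an iterated convolution of valuations to which formula~\eqref{eq:convolutionMV} applies. Explicitly, Proposition~\ref{prop:higherMixed2} gives
$$\phi_1\cdots\phi_n = \tfrac{(n(n-1))!}{(n!)^n}\,\widetilde V(\calC_1,\ldots,\calC_n)\,\vol_W,$$
so the task reduces to expressing $\phi_1\cdots\phi_n$ as an explicit scalar multiple of $\vol_W$ involving $V(\Pi\calC_1,\ldots,\Pi\calC_n)$.

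Because $\calC_1,\ldots,\calC_{n-1}$ consist of centrally symmetric bodies, each $\phi_i$ with $i\le n-1$ is even. Decomposing $\phi_n = \phi_n^+ + \phi_n^-$ into even and odd parts, the product $\phi_1\cdots\phi_{n-1}\cdot\phi_n^-$ is an $n$-homogeneous odd translation-invariant valuation; but every such valuation is a scalar multiple of $\vol_W$ and hence even, so this term vanishes and $\phi_1\cdots\phi_n=\phi_1\cdots\phi_{n-1}\cdot\phi_n^+$. Applying $\FF$ and using that it intertwines product and convolution, the task reduces to computing $\FF\phi_1*\cdots*\FF\phi_{n-1}*\FF\phi_n^+$. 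By Lemma~\ref{lem:fourierUpsilon} together with $\FF^2=\mathrm{id}$ on even valuations, $\FF\phi_i=\upsilon_i$ for $i\le n-1$, where $\upsilon_i(A)=\tfrac12 V(A[n-1],\Pi\calC_i)$. For the remaining factor I claim $\FF\phi_n^+=\upsilon_n$. By Klain's uniqueness theorem this follows from the matching of Klain functions, which in turn is implied by
$$\Klain_{\phi_n^+}(L_u) = \phi_n([0,u]) = \tfrac{1}{n}\int_{S^{n-1}}\max(0,\langle u,v\rangle)\,dS(\calC_n)(v) = \tfrac1n\, h_{\Pi\calC_n}(u)$$
together with the analogous computation of $\Klain_{\upsilon_n}(u^\perp)$ performed in the proof of Lemma~\ref{lem:fourierUpsilon}. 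Here the first equality uses that $\phi_n^-$ restricts to zero on every line (there are no odd $1$-homogeneous valuations in dimension one), and the last equality uses the vanishing of the first moment $\int v\,dS(\calC_n)(v)=0$ of the mixed area measure.

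Finally, a straightforward induction on $k$ using~\eqref{eq:convolutionMV} applied to $2\upsilon_i(A)=V(A[n-1],\Pi\calC_i)$ yields
$$\upsilon_1*\cdots*\upsilon_n = \frac{((n-1)!)^n}{2^n(n!)^{n-1}}\,V(\Pi\calC_1,\ldots,\Pi\calC_n)\,\chi.$$
Applying $\FF$ once more, using $\FF\chi=\vol_W$, comparing with the expression for $\phi_1\cdots\phi_n$ from Proposition~\ref{prop:higherMixed2}, and solving for $\widetilde V$ produces the claimed constant. The only conceptually nontrivial step is the identification $\FF\phi_n^+=\upsilon_n$ in the absence of central symmetry of $\calC_n$; everything else amounts to careful bookkeeping of constants.
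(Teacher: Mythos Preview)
Your proof is correct and follows the same overall strategy as the paper: identify $\widetilde V$ with the Alesker product via Proposition~\ref{prop:higherMixed2}, transport to convolution through the Fourier transform and Lemma~\ref{lem:fourierUpsilon}, and evaluate using~\eqref{eq:convolutionMV}. One small omission: you should note at the outset that by continuity of both sides one may assume all bodies are of class $C^\infty_+$; this is required for Proposition~\ref{prop:higherMixed2}, Lemma~\ref{lem:fourierUpsilon}, and~\eqref{eq:convolutionMV} (and then the lemma in Section~\ref{s:MixedProduct} guarantees $\Pi\calC_i$ is $C^\infty_+$ as well).

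The only substantive deviation is in handling the possibly non-symmetric tuple $\calC_n$. You verify $\FF\phi_n^+=\upsilon_n$ directly by matching Klain functions, using that the mixed area measure has vanishing first moment to pass from $\max(0,\langle u,v\rangle)$ to $\tfrac12|\langle u,v\rangle|$. The paper instead polarizes down to $\calC_i=(C_i[n-1])$, observes that $\phi_n^+(A)=\tfrac12 V\big(A,C_n\#(-C_n)[n-1]\big)$, and uses $\Pi\big(C_n\#(-C_n)[n-1]\big)=2\,\Pi\calC_n$ to reduce to the centrally symmetric case already covered by Lemma~\ref{lem:fourierUpsilon}. Your route is a touch more direct; the paper's avoids reopening the Klain-function computation at the price of invoking Blaschke addition.
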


\begin{proof}

Similarly as in the proof of Proposition~\ref{prop:higherMixed2} it causes no loss of generality to assume $\calC_i=(C_i[n-1])$, $i=1,\dots,n$. Moreover, we may clearly assume that the convex bodies are all of class $C^\infty_+$.  For $i=1,\dots,n$ consider the valuations
\begin{align*}
\psi_i(A)=V(A, C_i[n-1])
\end{align*}
and
\begin{align*}
\upsilon_i(A) = \frac 12 V (A[n-1], \Pi \calC_i).
\end{align*}

Let us first assume that also $C_n$ is centrally symmetric.  In this case, by Proposition~\ref{prop:higherMixed2}, Lemma~\ref{lem:fourierUpsilon}, and  \eqref{eq:convolutionMV} we have
\begin{align*} 
\frac{(n(n-1))!}{(n!)^n}     \widetilde  V(  \calC_1,  \cdots ,  \calC_n) \vol_n  & =\psi_1\cdots \psi_n \\
&=\FF( \upsilon_1*\cdots * \upsilon_n )\\
& =  \frac{ ((n-1)!)^n}{2^n (n!)^{n-1}} V(\Pi \calC_1,\ldots, \Pi \calC_n) \vol_n.
\end{align*} 

In general, we decompose $\psi_n= \psi^+ + \psi^-$ into its even and odd part. Since there are no  non-trivial odd valuations in degree $n$, we have $ \psi_1\cdots \psi_{n-1} \cdot \psi^-=0$ and hence
$$ \psi_1\cdots \psi_{n-1}\cdot  \psi_n=  \psi_1\cdots \psi_{n-1} \cdot \psi^+.$$
To conclude the proof it suffices to observe  that
\begin{align*}
\psi^+(A)=\frac12 V\big( A, C_n \# (-C_n) [n-1]\big)
\end{align*}
 and

\begin{align*}
\Pi\big(C_n \# (-C_n) [n-1]\big)= \Pi(C_n[n-1])+\Pi(-C_n[n-1])= 2\Pi(C_n[n-1]).
\end{align*}
\end{proof}

\section{Geometric consequences of the Hodge-Riemann relations}

We apply our results to the study of inequalities between mixed volumes. Our first result along with its proof is a direct generalization of \cite[Theorem 1.1, item (1)]{Alesker:Kotrbaty}.

\begin{theorem}
\label{thm:DMVminus}
Let $W$ be an $n$-dimensional real vector space with a fixed positive Lebesgue measure. Let $\calA,\calC_1,\dots,\calC_{n-2}$ be $(n-1)$-tuples of convex bodies in $W$ and assume that the bodies in $\calC_i$, $i=1,\dots,n-2$, are centrally symmetric. Then
\begin{align*}
\widetilde V(\calA,-\calA,\calC_1,\dots,\calC_{n-2})\leq  \widetilde V(\calA,\calA,\calC_1,\dots,\calC_{n-2}).
\end{align*}
\end{theorem}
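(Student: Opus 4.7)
My plan is to translate the claimed inequality into a positivity statement for a top-degree Alesker product of valuations and derive it from the Hodge-Riemann relations of Theorem~\ref{thm:MixedProduct}(b). By continuity of $\widetilde V$ in each argument (Proposition~\ref{prop:higherMixed}(c)) and a standard regularization (Minkowski-summing with a small ball, then mollifying, which preserves central symmetry), I may assume all bodies are of class $C^\infty_+$. Set $\phi(A)=V(A,A_1,\ldots,A_{n-1})$ and $\phi_i(A)=V(A,C_{i,1},\ldots,C_{i,n-1})$ for $i=1,\ldots,n-2$; these are smooth translation-invariant valuations of degree $1$, and each $\phi_i$ is even because the $C_{i,j}$ are centrally symmetric. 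Writing $\phi'(A)=V(A,-A_1,\ldots,-A_{n-1})$, Proposition~\ref{prop:higherMixed2} identifies
\[
\phi\cdot(\phi-\phi')\cdot\phi_1\cdots\phi_{n-2}=c_n\bigl[\widetilde V(\calA,\calA,\calC_1,\ldots,\calC_{n-2})-\widetilde V(\calA,-\calA,\calC_1,\ldots,\calC_{n-2})\bigr]\vol_W
\]
with a positive dimensional constant $c_n$, so it suffices to show the left-hand side is non-negative.

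The key observation is that by multilinearity of the mixed volume under simultaneous sign reversal, $V(-A,A_1,\ldots,A_{n-1})=V(A,-A_1,\ldots,-A_{n-1})$; hence $\phi'(A)=\phi(-A)$, so $\sigma\phi=-\phi'$, and consequently $\phi-\phi'=\phi+\sigma\phi=2\phi_o$, twice the odd part of $\phi$. Decomposing $\phi=\phi_e+\phi_o$ yields
\[
\phi\cdot(\phi-\phi')\cdot\phi_1\cdots\phi_{n-2}=2\phi_e\phi_o\phi_1\cdots\phi_{n-2}+2\phi_o^2\phi_1\cdots\phi_{n-2}.
\]
Since the Alesker product is parity-multiplicative, the first summand has parity $(+)(-)(+)^{n-2}=-1$ and lives in $\Val_n^\infty$; every top-degree translation-invariant continuous valuation is a multiple of $\vol_W$ and hence even, so this summand vanishes. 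The inequality is thereby reduced to $\phi_o^2\phi_1\cdots\phi_{n-2}\geq 0$.

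To close the argument I invoke Theorem~\ref{thm:MixedProduct}(b). I complete the sequence of Lefschetz operators by choosing any centrally symmetric body $C_{n-1}$ of class $C^\infty_+$ and setting $\psi_{n-1}(A)=V(A,C_{n-1}[n-1])$, an even valuation of degree $1$. The same parity counting shows that $\phi_o\phi_1\cdots\phi_{n-2}\psi_{n-1}$ is odd of degree $n$ and hence vanishes, so $\phi_o$ is primitive in the sense of Theorem~\ref{thm:MixedProduct}(b) with respect to the operators $\phi_1,\ldots,\phi_{n-2},\psi_{n-1}$. Since $\phi_o$ is real and $\sigma\phi_o=\phi_o$, the Hodge-Riemann inequality reads $\phi_o^2\phi_1\cdots\phi_{n-2}\geq 0$, as required.

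The Hodge-Riemann mechanism is clean, and the main conceptual step is the parity bookkeeping: the odd part of a degree-$1$ valuation is automatically primitive with respect to any even Lefschetz operators, so the asymmetric body $\calA$ causes no difficulty despite the symmetry hypothesis in Theorem~\ref{thm:MixedProduct}. I expect the only technical nuisance to be the initial approximation to class $C^\infty_+$ preserving central symmetry in the $\calC_i$-slots, which is routine.
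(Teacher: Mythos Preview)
Your proof is correct and follows essentially the same approach as the paper. Both arguments reduce the inequality to the Hodge--Riemann relations applied to the odd, degree-$1$ valuation $\phi_o$ (which is automatically primitive against even Lefschetz operators), and then translate back via Proposition~\ref{prop:higherMixed2}; the paper simply defines its $\phi$ to be your $\phi-\phi'=2\phi_o$ from the outset and is terser about the primitivity step. Your choice to expand $\phi\cdot(\phi-\phi')$ rather than $(\phi-\phi')^2$ is mildly cleaner, as it lands directly on the desired difference without needing the auxiliary identity $\widetilde V(-\calA,-\calA,\calC_1,\ldots,\calC_{n-2})=\widetilde V(\calA,\calA,\calC_1,\ldots,\calC_{n-2})$.
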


\begin{proof}
Write $\calA=(A_1,\dots,A_{n-1})$ and $\calC_i=(C_{i,1},\dots,C_{i,n-1})$, $i=1,\dots,n-2$. By continuity we may assume that all these bodies are of class $C_+^\infty$. Consider now the valuations given by
$$\phi(A)=V(A,A_{1},\dots,A_{n-1})-V(A,-A_{1},\dots,-A_{n-1})$$
and
$$\psi_i(A)=V(A,C_{i,1},\dots,C_{i,n-1}),\quad i=1,\dots,n-2.$$
Since $\phi$ is 1-homogeneous and odd, it is necessarily primitive and satisfies $\sigma\phi=\phi$. Consequently, the Hodge-Riemann relations (Theorem~\ref{thm:MixedProduct}) imply
$$\phi^2\cdot\psi_1\cdots\psi_{n-2}\geq 0.$$
Expanding the product according to  Proposition~\ref{prop:higherMixed2}, the claim follows.
\end{proof}

The Hodge-Riemann relations (Theorem~\ref{thm:MixedProduct})  yield an analog of the Aleksandrov-Fenchel inequality for $\widetilde V$ as follows.

\begin{proof}[Proof of Theorem~\ref{thm:DAF}]
Since   $\widetilde V$ is continuous, we may assume that the convex bodies $A_{i,j}$ and $C_{i,j} $  are all of class $C^\infty_+$. Put
$$\phi_i(A) = V(A,A_{i,1},\ldots, A_{i,n-1}),\quad i=1,\ldots,m,$$
and 
$$ \psi_i(A)= V(A,C_{i,1},\ldots, C_{i,n-1}),\quad i=1,\ldots,n-2.$$
Observe that $ Q(\mu,\nu)= \mu\cdot\overline{\sigma \nu}  \cdot \psi_1\cdots\psi_{n-2}$ defines a symmetric sesquilinear form on $\Val_1^\infty$ and that, according to Proposition \ref{prop:higherMixed2},
\begin{align*}
v_{ij}=v_{ji}=-\frac1{c_n}Q(\phi_i,\phi_j),
\end{align*}
for $i,j=1,\dots,m$ and some positive constant $c_n$ depending only on the dimension.

Observe further that $v_{mm}>0$ and consider the 1-homogeneous smooth valuations
$$ \mu_i = \phi_i - \frac{v_{im}}{v_{mm}} \phi_m, \qquad i=1,\ldots, m-1.$$
Each $\mu_i$ is primitive in the sense that
\begin{align*}
\mu_i\cdot\b{\sigma\phi_m}\cdot\psi_1\cdots\psi_{n-2}&=-c_n\left(v_{im}-\frac{v_{im}v_{mm}}{v_{mm}}\right)=0.
\end{align*}
Consequently, the Hodge-Riemann relations (Theorem~\ref{thm:MixedProduct}) imply that the $(m-1)\times(m-1)$ matrix $(a_{ij})$ with
\begin{align*}
a_{ij}=Q(\mu_i,\mu_j)=-c_n\left(v_{ij}-\frac{v_{im}v_{jm}}{v_{mm}}\right)
\end{align*}
is positive semi-definite and hence $\det(a_{ij})\geq0$. Subtraction of rows easily yields
\begin{align*}
\det(v_{ij})=\det\begin{pmatrix}-c_n(a_{ij})&0\\ * &v_{mm}\end{pmatrix},
\end{align*} 
where `$*$' stands for an insignificant part of the matrix. Therefore we indeed have
\begin{align*}
(-1)^m\det(v_{ij})=-(c_n)^{m-1}\det(a_{ij})v_{mm}\leq0.
\end{align*}
\end{proof}

 We finally turn to the proof of Theorem~\ref{thm:AFfunctionals}, the generalization of Corollary~\ref{cor:R3}.
Let us first prove a technical lemma, assuming that, as usual, $W$ is an $n$-dimensional real vector space and $\vol_W$ is a choice of positive Lebesgue measure on it, and keeping the notation from Definition \ref{def:higherMixed}.

\begin{lemma}
\label{pro:MVfunctionals}
Let $1\leq k\leq n-1$ and let $\xi_1,\ldots, \xi_k\in W^*$ be linearly independent linear functionals. Consider the  map $\Delta_{n-k-1}\times L_k\maps{W}{ W^{n-k-1}\times \RR^k}$, where $\Delta_{n-k-1}$ is the diagonal embedding and $L_k=(\xi_1,\dots,\xi_k)$. Take any convex bodies $A_1, \ldots, A_{n-k}, C_1,\ldots, C_k\subset W$ with $C_i\subset \ker \xi_i$ and $\dim C_i=n-1$ for $i=1,\ldots, k$,  and write $\calA_j=(A_j[n-1])$, $j=1,\dots,n-k$, and $\calC_i=(C_i[n-1])$, $i=1,\dots, k$. Then
\begin{align*}
&\widetilde V(\calA_1,\ldots, \calA_{n-k}, \calC_1,\ldots, \calC_k)\\
&\quad=  c V\big(\iota_1(A_1)[n-1],\ldots, \iota_{n-k-1}(A_{n-k-1})[n-1], (\Delta_{n-k-1}\times L_k)(-A_{n-k})[n-1]\big),
\end{align*}
where the mixed volume is taken in $W^{n-k-1}\times\RR^k$ and $c$ is a  positive real constant that  does not depend on  $A_1,\ldots,A_{n-k}$.
\end{lemma}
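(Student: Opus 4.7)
The plan is to reduce $\widetilde V$ to an ordinary mixed volume in a product space via two successive applications of Alesker's projection lemma (Lemma~\ref{lem:volumeProjection}). The first application exploits that each $C_i$ lies in the hyperplane $\ker\xi_i$; the second uses that, by Proposition~\ref{prop:higherMixed2}, $\widetilde V$ multiplied by $\vol_W(A)$ for an auxiliary body $A$ equals a mixed volume on $W^n$ containing $n$ extra diagonal copies of $A$.

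Concretely, Proposition~\ref{prop:higherMixed2} (applied to the partition $n=1+\dots+1$) together with the computation in its proof yields
\begin{align*}
&\widetilde V(\calA_1,\dots,\calA_{n-k},\calC_1,\dots,\calC_k)\cdot\vol_W(A) \\
&\quad= c_n V_{W^n}\bigl(\Delta_n(A)[n],\iota_1(A_1)[n-1],\dots,\iota_{n-k}(A_{n-k})[n-1],\iota_{n-k+1}(C_1)[n-1],\dots,\iota_n(C_k)[n-1]\bigr)
\end{align*}
for some $c_n>0$ depending only on $n$. The bodies $\iota_{n-k+i}(C_i)$ lie in $\iota_{n-k+i}(\ker\xi_i)$, whose direct sum $U=\bigoplus_{i=1}^k\iota_{n-k+i}(\ker\xi_i)\subset W^n$ has dimension $k(n-1)$, matching their total multiplicity. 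Applying the polarized form of Lemma~\ref{lem:volumeProjection} to $0\to U\to W^n\to W^n/U\to 0$ factors the right-hand side into $V_U\bigl(\iota_{n-k+1}(C_1)[n-1],\dots,\iota_n(C_k)[n-1]\bigr)=\mathrm{const}\cdot\prod_{i=1}^k\vol_{\ker\xi_i}(C_i)$ times a mixed volume on $W^n/U$. Via the canonical identification $W^n/U\cong W^{n-k}\times\RR^k$ given by $(w_1,\dots,w_n)\mapsto(w_1,\dots,w_{n-k},\xi_1(w_{n-k+1}),\dots,\xi_k(w_n))$, the projected bodies become $(\Delta_{n-k}\times L_k)(A)$ and $\iota_j(A_j)$ for $j\leq n-k$.

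A second application of Lemma~\ref{lem:volumeProjection} to the exact sequence
\[
0\longrightarrow(\Delta_{n-k}\times L_k)(W)\longrightarrow W^{n-k}\times\RR^k\longrightarrow W^{n-k-1}\times\RR^k\longrightarrow 0,
\]
in which the second map sends $(v_1,\dots,v_{n-k},r_1,\dots,r_k)$ to $(v_1-v_{n-k},\dots,v_{n-k-1}-v_{n-k},r_1-\xi_1(v_{n-k}),\dots,r_k-\xi_k(v_{n-k}))$, completes the reduction: the $n$ copies of $(\Delta_{n-k}\times L_k)(A)$ exactly fill the $n$-dimensional kernel and contribute $\vol_W(A)$, while direct inspection shows that the second map sends $\iota_j(A_j)$ to $\iota_j(A_j)$ for $j<n-k$ and $\iota_{n-k}(A_{n-k})$ to $(\Delta_{n-k-1}\times L_k)(-A_{n-k})$. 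Dividing through by $\vol_W(A)$ and absorbing all positive constants (which depend only on $n$ and on the $\vol_{\ker\xi_i}(C_i)$) into $c$ yields exactly the claimed formula.

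The main technical obstacle I anticipate is the careful bookkeeping of Lebesgue measures and combinatorial factors across the two invocations of Lemma~\ref{lem:volumeProjection}, particularly when verifying that $V_U$ splits into a constant multiple of $\prod_i\vol_{\ker\xi_i}(C_i)$ under the canonical measures inherited from the exact sequences via~\eqref{eq:densities}. Once the measures are tracked, the geometric identifications of the projected bodies follow by direct computation from the explicit formulas for $\iota_j$, $\Delta_\bullet$, and the two quotient maps.
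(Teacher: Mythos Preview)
Your argument is correct but follows a genuinely different route from the paper's. The paper proceeds by induction on $k$: for the base case it first passes from $\coker\Delta_n$ to $W^{n-1}$ via an explicit isomorphism, then applies Lemma~\ref{lem:volumeProjection} once to quotient by $\Delta_{n-1}(\ker\xi_1)$; the inductive step applies Lemma~\ref{lem:volumeProjection} once more to quotient by $(\Delta_{n-k}\times L_{k-1})(\ker\xi_k)$, peeling off one $C_i$ at a time. Your approach is direct rather than inductive: you introduce an auxiliary body $A$ to pull $\widetilde V$ back to a mixed volume on $W^n$ (this is really just Lemma~\ref{lem:volumeProjection} applied to $0\to\im\Delta_n\to W^n\to\coker\Delta_n\to 0$, which is exactly what the proof of Proposition~\ref{prop:higherMixed2} does), then factor out \emph{all} the $C_i$'s simultaneously via the subspace $U=\bigoplus_i\iota_{n-k+i}(\ker\xi_i)$, and finally remove the auxiliary $A$ by a second quotient. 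The paper's inductive approach keeps each step lighter and avoids the auxiliary body, while yours treats the $C_i$'s more uniformly and makes the geometric picture---that the $C_i$'s merely contribute a positive scalar via $\prod_i\vol_{\ker\xi_i}(C_i)$---entirely transparent in one stroke. One minor point: your first displayed identity is borrowed from the proof (not the statement) of Proposition~\ref{prop:higherMixed2}, where smoothness of the bodies is assumed; either note that it follows directly from Lemma~\ref{lem:volumeProjection} without smoothness, or add a one-line continuity remark.
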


\begin{proof}
We will proceed by induction. To this end, assume first that $k=1$. Clearly the linear map $f\colon W^n\to W^{n-1}$ given by
$$f(w_1,\ldots,w_n)=(w_1-w_n,\ldots, w_{n-1}-w_n)$$
vanishes on $\im\Delta_n$, and  hence descends to an isomorphism $ \overline f\colon \coker \Delta_n\to W^{n-1}$ satisfying $\overline f\circ \pi = f$, where $\pi\maps{ W^n}{ \coker \Delta_n}$ is the canonical map.  Applying this isomorphism, we obtain 
\begin{align*}
&\widetilde V(\calA_1,\ldots, \calA_{n-1},\calC_1)\\
&\qquad = c V\big(\iota_1 (A_1)[n-1],\ldots,\iota_{n-1}(A_{n-1})[n-1],\Delta_{n-1}(-C_1)[n-1]\big),
\end{align*}
 where here and in the following $c$ is a positive constant that may differ from line to line, but never depends on the  convex bodies $A_1,\ldots,  A_{n-1}$. Similarly, the map $ \varphi_{1}\maps{ W^{n-1}}{ W^{n-2}\times \RR}$ given by 
 $$ \varphi_1(w_1,\ldots,w_{n-1}) = \big(w_1-w_{n-1}, \ldots, w_{n-2}-w_{n-1}, -\xi_1(w_{n-1})\big).$$
satisfies $\ker\varphi_1=\Delta_{n-1}(\ker\xi_1)$ and induces thus an isomorphism
$$\overline{\varphi_1}\maps{W^{n-1}/\Delta_{n-1}(\ker\xi_1)}{W^{n-2}\times \RR}$$ with $\overline{\varphi_1}\circ \pi_1 = \varphi_1$, where $\pi_1$ is the canonical projection.  Using Lemma~\ref{lem:volumeProjection}, we conclude that
\begin{align*}
&\widetilde V(\calA_1,\ldots, \calA_{n-1},\calC_1)\\
&\qquad= c V\big(\iota_1(A_1)[n-1],\ldots, \iota_{n-1}(A_{n-2})[n-1], (\Delta_{n-2}\times L_1)(-A_{n-1})[n-1]\big).
\end{align*} 
This finishes the proof in the case $k=1$.

Second, assume $k>1$ and that the claim holds for $k-1$. Then 
\begin{align*}
&\widetilde V(\calA_1,\ldots, \calA_{n-k}, \calC_1,\ldots, \calC_k)\\
&\qquad=  c  V\left(\iota_1(A_1)[n-1],\ldots, \iota_{n-k}(A_{n-k})[n-1], (\Delta_{n-k}\times L_{k-1})(-C_{k})[n-1]\right).
\end{align*}
We define $\varphi_k\colon W^{n-k}\times \RR^{k-1}\to W^{n-k-1}\times \RR^{k}$
by 
$$ \varphi_k(w_1,\ldots, w_{n-k}, x)= \big(w_1-w_{n-k}, \ldots , w_{n-k-1}-w_{n-k}, -L_{k-1}(w_{n-k})+x, -\xi_k(w_{n-k})\big).$$
Since $\ker\varphi_k=\Delta_{n-k}\times L_{k-1}(\ker\xi_k)$, we obtain an  isomorphism
$$\overline{\varphi_k}\maps{ \left(W^{n-k}\times \RR^{k-1}\right)\Big/\big(\Delta_{n-k}\times L_{k-1}(\ker\xi_k)\big)}{W^{n-k-1}\times \RR^{k}}$$
with $\overline{\varphi_k}\circ \pi_k = \varphi_k$, $\pi_k$ being the canonical projection, and conclude that 
\begin{align*}& \widetilde V(\calA_1,\ldots, \calA_{n-k}, \calC_1,\ldots, \calC_k)\\
 &\quad =  c V\left(\iota_{1}(A_1)[n-1],\ldots, \iota_{n-k-1}(A_{n-k-1})[n-1], (\Delta_{n-k-1}\times L_k)(-A_{n-k})[n-1]\right),
\end{align*}
as desired.
\end{proof}

\begin{theorem}
\label{thm:AFfunctionals}  Let $W$ be an $n$-dimensional real vector space, $n\geq 3$,  with a fixed positive Lebesgue measure. For linear functionals $\xi_1,\ldots, \xi_{n-2}\in W^*$, consider the graphing map $\b \xi\maps{ W}{ W\times \RR^{n-2}}$,
$$\b\xi(w)=\big(w,\xi_1(w),\ldots,\xi_{n-2}(w)\big).$$
Then for all convex bodies $A_1,\ldots, A_{n-1}, B_1,\ldots, B_{n-1}\subset W$ with $ B_1,\ldots, B_{n-1}$ centrally symmetric  we have
\begin{align*} 
&V \big(A_1,\ldots, A_{n-1},  \b\xi (B_1), \ldots, \b\xi(B_{n-1})\big)^2  \\
&\quad \geq  V \big(A_1,\ldots, A_{n-1},  \b\xi (A_1), \ldots, \b\xi(A_{n-1})\big) V\big(B_1,\ldots, B_{n-1},  \b\xi (B_1), \ldots, \b\xi(B_{n-1})\big),
 \end{align*}
where the mixed volume is taken in $W\times\RR^{n-2}$.
\end{theorem}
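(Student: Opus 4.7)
The plan is to derive the inequality from Theorem~\ref{thm:DAF} in the case $m=2$, using Lemma~\ref{pro:MVfunctionals} to translate the higher rank mixed volume $\widetilde V$ into ordinary mixed volumes on $W\times\RR^{n-2}$. Both sides of the claimed inequality depend continuously on $\xi=(\xi_1,\ldots,\xi_{n-2})\in (W^*)^{n-2}$, and the set of linearly independent tuples is dense in $(W^*)^{n-2}$ (since $n-2\leq n$), so I may assume without loss of generality that $\xi_1,\ldots,\xi_{n-2}$ are linearly independent. Under this assumption each $\ker\xi_i\subset W$ is an $(n-1)$-dimensional hyperplane, and I choose a centrally symmetric, full-dimensional convex body $C_i\subset\ker\xi_i$, e.g.\ an ellipsoid. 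Set $\calA=(A_1,\ldots,A_{n-1})$, $\calB=(B_1,\ldots,B_{n-1})$, and $\calC_i=(C_i[n-1])$; then by the hypothesis on the $B_j$'s and the choice of the $C_i$'s, both $\calB$ and each $\calC_i$ consist of centrally symmetric bodies, so the hypotheses of Theorem~\ref{thm:DAF} with $m=2$ are met.

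I next polarize Lemma~\ref{pro:MVfunctionals} in both A-type arguments: since both sides of the identity there are multilinear in each individual body under Minkowski addition, passing from the diagonal form $(A_j[n-1])$ to arbitrary $(n-1)$-tuples yields, in the case $k=n-2$,
\begin{equation*}
\widetilde V(\calA',\calB',\calC_1,\ldots,\calC_{n-2})
= c\, V_{W\times\RR^{n-2}}\bigl(A'_1,\ldots,A'_{n-1},-\b\xi(B'_1),\ldots,-\b\xi(B'_{n-1})\bigr)
\end{equation*}
for all $(n-1)$-tuples $\calA'=(A'_j)$, $\calB'=(B'_j)$ of convex bodies in $W$, where $c>0$ depends only on the $C_i$ and $\xi_i$ (not on $\calA',\calB'$) and I identify $A'_j$ with $\iota_1(A'_j)\subset W\times\{0\}$. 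Applying this to $(\calA',\calB')=(\calA,-\calA),(\calA,-\calB),(\calB,-\calB)$, and using linearity of $\b\xi$ (which gives $-\b\xi(-X)=\b\xi(X)$), I compute the entries $v_{ij}=\widetilde V(\calA_i,-\calA_j,\calC_1,\ldots,\calC_{n-2})$ of the matrix in Theorem~\ref{thm:DAF} (with $\calA_1=\calA$, $\calA_2=\calB$) as
\begin{align*}
v_{11} &= c\, V\bigl(A_1,\ldots,A_{n-1},\b\xi(A_1),\ldots,\b\xi(A_{n-1})\bigr),\\
v_{12} &= c\, V\bigl(A_1,\ldots,A_{n-1},\b\xi(B_1),\ldots,\b\xi(B_{n-1})\bigr),\\
v_{22} &= c\, V\bigl(B_1,\ldots,B_{n-1},\b\xi(B_1),\ldots,\b\xi(B_{n-1})\bigr).
\end{align*}

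Theorem~\ref{thm:DAF} with $m=2$ then gives $v_{11}v_{22}-v_{12}^2\leq 0$, and dividing by $c^2>0$ yields exactly the claimed inequality. I expect the main subtlety to be bookkeeping of signs: Lemma~\ref{pro:MVfunctionals} introduces a minus sign on its last A-type argument (inside $\b\xi$), and Theorem~\ref{thm:DAF} introduces another minus sign through the $-\calA_j$ appearing in the definition of $v_{ij}$; these two minuses combine, via linearity of $\b\xi$, to yield the unsigned expressions $\b\xi(A_i)$ and $\b\xi(B_i)$ appearing in the claim. The central symmetry hypothesis on the $B_j$'s plays the role of making $\calB=\calA_m$ centrally symmetric, which is precisely what Theorem~\ref{thm:DAF} requires; aside from this, the argument is a direct compilation of results already established.
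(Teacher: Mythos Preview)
Your proof is correct and follows essentially the same route as the paper's own proof: reduce to linearly independent $\xi_i$ by continuity, apply Lemma~\ref{pro:MVfunctionals} with $k=n-2$, polarize, and feed the result into inequality~\eqref{eq:daf2} (the $m=2$ case of Theorem~\ref{thm:DAF}). The paper's proof is terser---it simply says ``polarizing both sides and employing the assumption of central symmetry''---whereas you spell out the sign bookkeeping explicitly, but the underlying argument is the same.
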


\begin{proof}  Observe that by continuity $\xi_1,\ldots, \xi_{n-2}\in W^*$ may be assumed to be linearly independent.
Take any $A,B,C_1,\dots,C_{n-2}\in\calK(W)$ with $C_i\subset \ker \xi_i$ centrally symmetric and $\dim C_i=n-1$ for $i=1,\ldots, n-2$,  and write $\calA=(A[n-1])$, $\calB=(B[n-1])$, and $\calC_i=(C_i[n-1])$, $i=1,\dots, n-2$. By Lemma \ref{pro:MVfunctionals} (used for $k=n-2$) we have, for some $c>0$,
\begin{align*}
\widetilde V(\calA,\calB,\calC_1,\dots,\calC_{n-2})=  cV\big(A[n-1],\b\xi(-B)[n-1]\big).
\end{align*}
Polarizing both sides and employing the assumption of central symmetry, we may use   inequality \eqref{eq:daf2} which at once yields the result.
\end{proof}

\bibliographystyle{abbrv}
\bibliography{ref_papers,ref_books}

\end{document}